\begin{document}

\title{\sc Super Vertex Algebras, Meromorphic Jacobi Forms and Umbral Moonshine}

\author[1]{John F. R. Duncan\thanks{Email: \texttt{john.duncan@emory.edu}}}
\author[2]{Andrew O'Desky\thanks{Email: \texttt{aodesky@umich.edu}}}
\affil[1]{Department of Mathematics and Computer Science, Emory University, Atlanta, GA 30322, U.S.A.}
\affil[2]{Department of Mathematics, University of Michigan, Ann Arbor, MI 48109, U.S.A.}
\date{}

\renewcommand{\thefootnote}{\fnsymbol{footnote}} 
\footnotetext{\emph{MSC2010:} 11F37, 11F50, 17B69, 17B81, 20C35.}     

\maketitle

\begin{abstract}
The vector-valued mock modular forms of umbral moonshine may be repackaged into meromorphic Jacobi forms of weight one. In this work we constructively solve two cases of the meromorphic module problem for umbral moonshine. Specifically, for the type A Niemeier root systems with Coxeter numbers seven and thirteen, 
we construct corresponding bigraded super vertex operator algebras, equip them with actions of the corresponding umbral groups, and verify that the resulting trace functions on canonically twisted modules recover the meromorphic Jacobi forms that are specified by umbral moonshine. We also obtain partial solutions to the meromorphic module problem for the type A Niemeier root systems with Coxeter numbers four and five, by constructing super vertex operator algebras that recover the meromorphic Jacobi forms attached to maximal subgroups of the corresponding umbral groups. 
\end{abstract}

\clearpage

\setcounter{tocdepth}{1}

\tableofcontents

\section{Introduction}\label{sec:intro}

Eguchi--Ooguri--Tachikawa initiated a new phase in moonshine with their 
observation \cite{Eguchi2010} that representations of the largest sporadic Mathieu group $M_{24}$ are visible in the 
multiplicities of irreducible superconformal algebra modules in the K3 elliptic genus. 
The generating function of these multiplicities is a mock modular form $H^{(2)}$ of weight $\frac12$ (cf. \cite{Dabholkar:2012nd}). 
Once {\em twined} counterparts $H^{(2)}_g$ for $g\in M_{24}$ had been identified \cite{MR2793423,Gaberdiel2010a,Gaberdiel2010,Eguchi2010a} and characterized \cite{Cheng2011}, Gannon was able to confirm \cite{MR3539377} that there is a corresponding graded $M_{24}$-module, for which the $q$-series of $H^{(2)}=H^{(2)}_e$ is the graded dimension. 
But so far there has been no explicit construction of this {\em Mathieu moonshine} module, such as might be compared to the vertex operator algebra of monstrous moonshine \cite{MR554399,Tho_FinGpsModFns,Tho_NmrlgyMonsEllModFn} that was discovered by Frenkel--Lepowsky--Meurman \cite{FLMPNAS,FLMBerk,FLM}, and used to prove the monstrous moonshine conjectures by Borcherds \cite{MR1172696}. The purpose of this paper is to solve a closely related construction problem, for some closely related instances of moonshine.

To motivate our approach we recall the 
curious 
circumstance that the
{\em McKay--Thompson series} $H^{(2)}_g$ 
of Mathieu moonshine
may be repackaged into modular forms of different kinds. Indeed, 
if $\chi^{(2)}_g$ is the number of fixed points of $g\in M_{24}$ in the defining permutation representation on $24$ points, then 
\begin{gather}\label{eqn:intro-Z2g}
	Z^{(2)}_g(\tau,z):=
	\chi^{(2)}_g\frac{\mu_{2,0}(\tau,z)}{\mu_{1,0}(\tau,z)}
	+H^{(2)}_g(\tau)\frac{\theta_1(\tau,z)^2}{\eta(\tau)^3}
\end{gather}
is a weak Jacobi form of weight $0$, index $1$, and some level depending on $g$ (where $\mu_{m,0}$ is defined in (\ref{eqn:jf-mum0}), and $\theta_1$ and $\eta$ are recalled in \S\ref{sec:ujf}). 
The $g=e$ case of 
(\ref{eqn:intro-Z2g}) expresses the K3 elliptic genus in terms of $H^{(2)}$, and is the starting point of \cite{Eguchi2010}. 

This suggests that the Mathieu moonshine module might be realized in terms of a suitably chosen K3 sigma model, but it was found in \cite{MR2955931} that the symmetries of these objects are precisely the subgroups of the automorphism group of the Leech lattice---i.e., the {\em Conway group}, $\Co_0\simeq 2.\Co_1$---that fix a $4$-space. In particular, $M_{24}$ does not appear. Interestingly, 
it has been found \cite{MR3465528} that suitable trace functions attached to the moonshine module for Conway's group (see \cite{Dun_VACo,MR3376736}) attach weak Jacobi forms of weight $0$ and index $1$ (with level) to $4$-space-fixing automorphisms of the Leech lattice, and this construction recovers the K3 elliptic genus when applied to the trivial symmetry.  More generally, many---but not all---of the $Z^{(2)}_g$ appear in this way. So the Conway moonshine module serves as a kind of ``fake'' Mathieu moonshine module, with a closer connection to K3 sigma models (cf. \cite{Cheng:2016org,2017arXiv170205095P,2017arXiv170403678C,2017arXiv170403813T}) than to Mathieu moonshine itself.

As an alternative to (\ref{eqn:intro-Z2g}) we may consider the functions $\psi^{(2)}_g:=-\mu_{1,0}Z^{(2)}_g$, which are meromorphic Jacobi forms of weight $1$ and index $2$ (cf. \S\ref{sec:jf}).
Although this is a simple manipulation 
it seems to be essential for {\em umbral moonshine} 
\cite{UM,MUM,mumcor}, since in this more general setting weak Jacobi form formulations of the McKay--Thompson series are only known in some cases, whereas meromorphic Jacobi forms $\psi^{(\ell)}_g$ may be constructed in a uniform way (cf. \S4 of \cite{MUM}, or \S\ref{sec:jf} of this work). In umbral moonshine 
vector-valued mock modular forms $H^{(\ell)}_g=(H^{(\ell)}_{g,r})$ are associated to (outer) automorphisms of {\em Niemeier lattices} (i.e., self-dual even positive definite lattices of rank $24$ with roots), and Mathieu moonshine is recovered by specializing to the Niemeier lattice whose root system is $A_1^{\oplus 24}$. It has been proven \cite{umrec} that the 
$H^{(\ell)}_g$ 
define modules for the groups to which they are attached, but except for the case of the Niemeier lattice $E_8^{\oplus 3}$ (see \cite{MR3649360}), no explicit constructions of the $H^{(\ell)}_g$ as traces on algebraic structures are known. 

An extension of the method of \cite{MR3649360} apparently requires a finer knowledge of the relationship between mock modular forms and indefinite lattices than is currently available. So here we promote the alternative approach of focusing on the meromorphic Jacobi forms $\psi^{(\ell)}_g$ rather than the vector-valued mock modular forms $H^{(\ell)}_g$. We call this the {\em meromorphic module problem} for umbral moonshine. 

In this work we solve the meromorphic module problem for the cases of umbral moonshine corresponding to the Niemeier lattices $A_{6}^{\oplus 4}$ and $A_{12}^{\oplus 2}$ (corresponding to $\ell=7$ and $\ell=13$, respectively), and provide partial solutions for $A_{3}^{\oplus 8}$ and $A_{4}^{\oplus 6}$ (corresponding to $\ell=4$ and $\ell=5$, respectively). We achieve this by considering suitable tensor products of simple free field super vertex operator algebras, equipping them with suitable bigradings, and identifying suitable trace functions on their canonically twisted modules. This approach is motivated by the fact that many of the corresponding meromorphic Jacobi forms admit product formulas (cf. \S\ref{sec:ujf}). For $A_{6}^{\oplus 4}$ and $A_{12}^{\oplus 2}$ the corresponding umbral groups act naturally, and all the corresponding $\psi^{(\ell)}_g$ are realized explicitly (see Theorems \ref{thm:moon:7-psig} and \ref{thm:moon:13-psig}). For $A_{3}^{\oplus 8}$ and $A_{4}^{\oplus 6}$ we find actions of certain maximal subgroups of the corresponding umbral groups, and realize most, but not all, of the corresponding $\psi^{(\ell)}_g$ (see Propositions \ref{prop:moon:4-psig} and \ref{prop:moon:5-psig}). 

It will be interesting to see if a modification of the methods presented here can solve the meromorphic module problem completely for $A_{3}^{\oplus 8}$ and $A_{4}^{\oplus 6}$. We expect that that would yield some useful insight into the broader question of constructing meromorphic umbral moonshine modules in general.

The structure of the article is as follows. In \S\ref{sec:va} we briefly recall the Clifford module and Weyl module constructions of super vertex operator algebras and their canonically twisted modules. In \S\ref{sec:jf} we recall the relationship between the mock modular forms $H^{(\ell)}_g$ and the meromorphic Jacobi forms $\psi^{(\ell)}_g$, for the Niemeier lattices with root system of the form $A_{\ell-1}^{\oplus d}$ (i.e., the cases that $\ell-1$ is a divisor of $24$). Our new results are Theorems \ref{thm:moon:7-psig} and \ref{thm:moon:13-psig}, and Propositions \ref{prop:moon:4-psig} and \ref{prop:moon:5-psig}. They appear in \S\ref{sec:moon}. In \S\ref{sec:tab} we present the character tables of the umbral groups $G^{(\ell)}$ for $\ell\in \{4,5,7,13\}$, and for the relevant maximal subgroups in case $\ell\in \{4,5\}$. In \S\ref{sec:ujf} we recall the explicit expressions for the $\psi^{(\ell)}_g$ that were used for the purpose of proving the (abstract) module conjectures for umbral moonshine in \cite{umrec}. These expressions play a role in the proofs of 
our results in \S\ref{sec:moon}. In \S\ref{sec:tab:eul} we recall the definitions of the characters $\chi^{(\ell)}_g$ and $\bar\chi^{(\ell)}_g$ which appear in the formula that relates $H^{(\ell)}_g$ to $\psi^{(\ell)}_g$ (cf. \S\ref{sec:jf}).

\section{Super Vertex Algebras}\label{sec:va}

We briefly review the Clifford module and Weyl module constructions of super vertex operator algebras, and their canonically twisted modules in this section. 
The umbral moonshine modules we present in \S\ref{sec:moon} will be realized as tensor products of these simple free field super vertex operator algebras. 
We refer the reader to \cite{MR1651389,MR2023933,MR2082709} for background on vertex algebra theory.

\subsection{Clifford Modules}\label{sec:va:cliff}

Let $\gt{a}$ be a complex vector space and let $\lab\cdot\,,\cdot\rab$ be a non-degenerate symmetric bilinear form on $\gt{a}$. The {\em Clifford algebra} associated to this data is $\Cliff{\gt{a}}:=T(\gt{a})/I$ where $T(\gt{a}):=\CC {\bf 1}\oplus \gt{a}\oplus\gt{a}^{\otimes 2}\oplus\cdots$ is the tensor algebra of $\gt{a}$, and $I$ is the ideal of $T(\gt{a})$ generated by the expressions $a\otimes a'+a'\otimes a -\lab a,a'\rab{\bf 1}$ for $a,a'\in\gt{a}$. The composition of natural maps $\gt{a}\to T(\gt{a})\to \Cliff{\gt{a}}$ is an embedding, so we may regard $\gt{a}$ as a subspace of $\Cliff{\gt{a}}$. Let ${\bf 1}$ also denote the unit in $\Cliff{\gt{a}}$. A {\em polarization} of $\gt{a}$ is a vector space splitting $\gt{a}=\gt{a}^+\oplus \gt{a}^-$ for which the summands $\gt{a}^\pm$ are isotropic for the given bilinear form. Given such a splitting (this requires $\dim \gt{a}$ to be even if it is finite) the induced module $\Cliff{\gt{a}}\otimes_{\lab \gt{a}^+\rab}\CC\vv$ is irreducible for $\Cliff{\gt{a}}$, when $\lab\gt{a}^+\rab$ is the sub algebra of $\Cliff{\gt{a}}$ generated by ${\bf 1}$ and $\gt{a}^+$, and $\CC\vv$ is the unique unital $\lab \gt{a}^+\rab$-module such that $a\vv=0$ for every $a\in \gt{a}^+$.

Henceforth assume that $\dim\gt{a}$ is finite and even. For $r\in \frac12\ZZ$ let $\gt{a}(r)$ be a vector space isomorphic to $\gt{a}$. Choose an isomorphism $\gt{a}\to \gt{a}(r)$ for each $r$, and denote it $a\mapsto a(r)$. Define $\hat{\gt{a}}:=\bigoplus_{n\in\ZZ}\gt{a}(n+\frac12)$ and $\hat{\gt{a}}_\tw:=\bigoplus_{n\in\ZZ}\gt{a}(n)$, and extend $\lab\cdot\,,\cdot\rab$ to $\hat{\gt{a}}$ and $\hat{\gt{a}}_\tw$ by requiring that $\lab a(r),a'(r')\rab=\lab a,a'\rab\delta_{r+r',0}$ for $a,a'\in\gt{a}$ and $r,r'\in\frac12\ZZ$. Choose a polarization $\gt{a}=\gt{a}^+\oplus \gt{a}^-$ of $\gt{a}$, and define polarizations of $\hat{\gt{a}}$ and $\hat{\gt{a}}_\tw$ by setting 
\begin{gather}
\begin{split}
	\hat{\gt{a}}^+:=\bigoplus_{n\geq 0
	}\gt{a}(n+\tfrac12),&\qquad
	\hat{\gt{a}}^-:=\bigoplus_{n<0
	}\gt{a}(n+\tfrac12),\\
	\hat{\gt{a}}_\tw^+:=\gt{a}^+(0)\oplus\bigoplus_{n>0
	}\gt{a}(n),&\qquad
	\hat{\gt{a}}_\tw^-:=\gt{a}^-(0)\oplus\bigoplus_{n<0
	}\gt{a}(n).	
\end{split}
\end{gather}
The {\em Clifford module super vertex algebra} associated to $\gt{a}$ and $\lab\cdot\,,\cdot\rab$ is the unique super vertex algebra structure on $A(\gt{a}):=\Cliff{\hat{\gt{a}}}\otimes_{\lab\hat{\gt{a}}^+\rab}\CC\vv$ such that $\vv$ is the vacuum, and $Y(a(-\frac12)\vv,z)=\sum_{n\in\ZZ} a(n+\frac12)z^{-n-1}$ for $a\in \gt{a}$. Note that $A(\gt{a})$ is simple. 
Define $A(\gt{a})_\tw:=\Cliff{\hat{\gt{a}}_\tw}\otimes_{\lab\hat{\gt{a}}_\tw^+\rab}\CC\vv_\tw$ (where $\CC\vv_\tw$ is the unique unital $\lab\hat{\gt{a}}_\tw^+\rab$-module such that $u\vv_\tw=0$ for $u\in\hat{\gt{a}}_\tw^+$). Then there is a unique structure of canonically twisted $A(\gt{a})$-module on $A(\gt{a})_\tw$ such that $Y_\tw(a(-\frac12),z)=\sum_{n\in\ZZ}a(n)z^{-n-\frac12}$ for $a\in \gt{a}$. If $\{a_i^\pm\}$ is a basis for $\gt{a}^\pm$ such that $\lab a_i^\mp,a_j^\pm\rab=\delta_{i,j}$ then 
\begin{gather}
\omega:=\frac12\sum_i ( a_i^+(-\tfrac32)a_i^-(-\tfrac12) - a_i^+(-\tfrac12)a_i^-(-\tfrac32) )\vv
\end{gather}
is a Virasoro element for $A(\gt{a})$ with central charge $c=\frac12\dim\gt{a}$ that makes $A(\gt{a})$ a super vertex operator algebra. 

Set $\jmath:=\sum_i a_i^+(-\tfrac12)a_i^-(-\tfrac12)\vv$. Write $J(n)$ for the coefficient of $z^{-n-1}$ in $Y(\jmath,z)$ or $Y_\tw(\jmath,z)$, and write $L(n)$ for the coefficient of $z^{-n-2}$ in $Y(\omega,z)$ or $Y_\tw(\omega,z)$. Then 
$J(0)$ and $L(0)$ commute, and act semisimply on $A(\gt{a})$ and $A(\gt{a})_\tw$, with finite-dimensional (simultaneous) eigenspaces. For the corresponding bigraded dimensions we have 
\begin{gather}
	\tr(y^{J(0)}q^{L(0)-\frac{c}{24}}|A(\gt{a}))=q^{-\frac{d}{48}}\prod_{n>0}(1+y^{-1}q^{n-\frac12})^{\frac d2 }(1+yq^{n-\frac12})^{\frac d2},\\
	\tr(y^{J(0)}q^{L(0)-\frac{c}{24}}|A(\gt{a})_\tw)=y^{\frac d 4}q^{\frac{d}{24}}\prod_{n>0}(1+y^{-1}q^{n-1})^{\frac d2 }(1+yq^{n})^{\frac d2},
\end{gather}
when $d=\dim\gt{a}$. Note that $\omega$ does not depend upon the choice of polarization $\gt{a}=\gt{a}^+\oplus \gt{a}^-$, but $\jmath$ does.

The group $\GL(\gt{a}^+)$ acts naturally on $A(\gt{a})$ and $A(\gt{a})_\tw$, preserving $\omega$ and $\jmath$. For $g\in \GL(\gt{a}^+)$ there is a unique $g'\in \GL(\gt{a}^-)$ such that $\lab ga,g'a'\rab=\lab a,a'\rab$ for all $a\in\gt{a}^+$ and $a'\in \gt{a}^-$. Abusing notation slightly, we write $g$ also for the linear automorphism $(g,g')$ on $\gt{a}=\gt{a}^+\oplus \gt{a}^-$. Then the action of $\GL(\gt{a}^+)$ on $A(\gt{a})$ is given by $g\cdot a_1(r_1)\dots a_n(r_n)\vv:=(ga_1)(r_1)\dots (ga_n)(r_n)\vv$ for $a_i\in\gt{a}$ and $r_i\in \ZZ+\frac12$, and similarly for $A(\gt{a})_\tw$. We then have $Y(gu,z)gv=gY(u,z)v$ and $Y_\tw(gu,z)gw=gY_\tw(u,z)w$ for $u,v\in A(\gt{a})$ and $w\in A(\gt{a})_\tw$, and also $g\omega=\omega$ and $g\jmath=\jmath$, so the bigradings of $A(\gt{a})$ and $A(\gt{a})_\tw$ are preserved.

\subsection{Weyl Modules}\label{sec:va:weyl}

The Weyl module construction runs in parallel with that of the previous section, but with an anti-symmetric bilinear form in place of a symmetric one. So let $\gt{b}$ be a complex vector space and let $\llab\cdot\,,\cdot\rab$ be a non-degenerate anti-symmetric bilinear form on $\gt{b}$. The {\em Weyl algebra} associated to this data is $\Weyl{\gt{b}}:=T(\gt{b})/I$ where 
$I$ is the ideal of $T(\gt{b})$ generated by $b\otimes b'-b'\otimes b -\llab b,b'\rab{\bf 1}$ for $b,b'\in\gt{b}$. Just as for Clifford algebras we may naturally identify $\gt{b}$ as a subspace of $\Weyl{\gt{b}}$, and we write ${\bf 1}$ also for the unit in $\Weyl{\gt{b}}$. 
Given a polarization $\gt{b}=\gt{b}^+\oplus \gt{b}^-$ (so that $\gt{b}^\pm$ is isotropic for $\llab\cdot\,,\cdot\rab$), the induced module $\Weyl{\gt{b}}\otimes_{\lab \gt{b}^+\rab}\CC\vv$ is irreducible for $\Weyl{\gt{b}}$. 

Assume now that $\dim\gt{b}$ is finite. This forces $\dim\gt{b}$ to be even. 
Define $\hat{\gt{b}}:=\bigoplus_{n\in\ZZ}\gt{b}(n+\frac12)$ and $\hat{\gt{b}}_\tw:=\bigoplus_{n\in\ZZ}\gt{b}(n)$, just as in the previous section, and extend $\llab\cdot\,,\cdot\rab$ to $\hat{\gt{b}}$ and $\hat{\gt{b}}_\tw$ by requiring that $\llab b(r),b'(r')\rab=\llab b,b'\rab\delta_{r+r',0}$ for $b,b'\in\gt{b}$ and $r,r'\in\frac12\ZZ$. Choose a polarization $\gt{b}=\gt{b}^+\oplus \gt{b}^-$ of $\gt{b}$, and define polarizations of $\hat{\gt{b}}$ and $\hat{\gt{b}}_\tw$ by setting 
\begin{gather}
\begin{split}
	\hat{\gt{b}}^+:=\bigoplus_{n\geq 0 }\gt{b}(n+\tfrac12),&\qquad
	\hat{\gt{b}}^-:=\bigoplus_{n<0}\gt{b}(n+\tfrac12),\\
	\hat{\gt{b}}_\tw^+:=\gt{b}^+(0)\oplus\bigoplus_{n> 0}
	\gt{b}(n),&\qquad
	\hat{\gt{b}}_\tw^-:=\gt{b}^-(0)\oplus\bigoplus_{n< 0}
	\gt{b}(n).
\end{split}
\end{gather}
The {\em Weyl module super vertex algebra} associated to $\gt{b}$ and $\llab\cdot\,,\cdot\rab$ is the unique super vertex algebra structure on $\uA(\gt{b}):=\Weyl{\hat{\gt{b}}}\otimes_{\lab\hat{\gt{b}}^+\rab}\CC\vv$ such that $\vv$ is the vacuum, and $Y(b(-\frac12)\vv,z)=\sum_{n\in\ZZ} b(n+\frac12)z^{-n-1}$ for $b\in \gt{b}$. Define ${\uA(\gt{b})}_\tw:=\Weyl{\hat{\gt{b}}_\tw}\otimes_{\lab\hat{\gt{b}}_\tw^+\rab}\CC\vv_\tw$. 
Then there is a unique structure of canonically twisted ${\uA(\gt{b})}$-module on ${\uA(\gt{b})}_\tw$ such that $Y_\tw(b(-\frac12),z)=\sum_{n\in\ZZ}b(n)z^{-n-\frac12}$ for $b\in \gt{b}$. If $\{b_i^\pm\}$ is a basis for $\gt{b}^\pm$ such that $\llab b_i^\mp,b_j^\pm\rab=\pm\delta_{i,j}$ then 
\begin{gather}
\omega:=\frac12\sum_i ( b_i^+(-\tfrac32)b_i^-(-\tfrac12) - b_i^+(-\tfrac12)b_i^-(-\tfrac32) )\vv
\end{gather}
is a Virasoro element for ${\uA(\gt{b})}$, with central charge $c=-\frac12\dim\gt{b}$, that makes ${\uA(\gt{b})}$ a super vertex operator algebra. Note that although $\uA(\gt{b})$ is simple and $C_2$-cofinite, it is not rational (cf. \cite{MR2274534}). 

Set $\jmath:=\sum_i b_i^+(-\tfrac12)b_i^-(-\tfrac12)\vv$. Write $J(n)$ for the coefficient of $z^{-n-1}$ in $Y(\jmath,z)$ or $Y_\tw(\jmath,z)$, and write $L(n)$ for the coefficient of $z^{-n-2}$ in $Y(\omega,z)$ or $Y_\tw(\omega,z)$. Then, just as in the Clifford case, 
$J(0)$ and $L(0)$ commute, and act semisimply on $\uA(\gt{b})$ and ${\uA(\gt{b})}_\tw$, with finite-dimensional (simultaneous) eigenspaces. For the corresponding bigraded dimensions we have 
\begin{gather}
	\label{eqn:va:weyl-bgdim}
	\tr(y^{J(0)}q^{L(0)-\frac{c}{24}}|\uA(\gt{b}))
	=q^{\frac{d}{48}}\prod_{n>0}(1-y^{-1}q^{n-\frac12})^{-\frac d2 }(1-yq^{n-\frac12})^{-\frac d2},\\
	\label{eqn:va:weyl-bgdimtw}
	\tr(y^{J(0)}q^{L(0)-\frac{c}{24}}|\uA(\gt{b})_\tw)
	=y^{-\frac d 4}q^{-\frac{d}{24}}\prod_{n>0}(1-y^{-1}q^{n-1})^{-\frac d2 }(1-yq^{n})^{-\frac d2},
\end{gather}
when $d=\dim\gt{b}$. Note that $(1-X)^{-1}$ should be interpreted as $\sum_{n\geq 0}X^n$ in (\ref{eqn:va:weyl-bgdim}) and (\ref{eqn:va:weyl-bgdimtw}).

Similar again to the Clifford case, the group $\GL(\gt{b}^+)$ acts naturally on $\uA(\gt{b})$ and ${\uA(\gt{b})}_\tw$, preserving their bigradings. Explicitly, for $g\in \GL(\gt{b}^+)$ write $g$ also for the linear automorphism $(g,g')$ on $\gt{b}=\gt{b}^+\oplus \gt{b}^-$, where $g'\in\GL(\gt{b}^-)$ is determined by requiring $\llab gb,g'b'\rab=\llab b,b'\rab$ for all $b\in\gt{b}^+$ and $b'\in \gt{b}^-$. 
The action of $\GL(\gt{b}^+)$ on $\uA(\gt{b})$ is given by $g\cdot b_1(r_1)\dots b_n(r_n)\vv:=(gb_1)(r_1)\dots (gb_n)(r_n)\vv$ for $b_i\in\gt{b}$ and $r_i\in \ZZ+\frac12$, and similarly for $\uA(\gt{b})_\tw$. Vertex operators are preserved by this action, as are $\omega$ and $\jmath$, just as in \S\ref{sec:va:cliff}.

\section{Meromorphic Jacobi Forms}\label{sec:jf}

We briefly review the  
relationship between meromorphic Jacobi forms and the mock modular forms of umbral moonshine in this section. 
The original reference for this is \S4 of \cite{MUM}.
We refer the reader to \cite{Dabholkar:2012nd} for more detailed and more general discussions of mock modular forms, mock Jacobi forms and 
meromorphic Jacobi forms. 

Let $X$ be a Niemeier root system. For simplicity we restrict to the {\em pure type A} case that $X=A_{m-1}^{\oplus d}$ for some integer $m>1$ such that $m-1$ is a divisor of $24$, and $d:=\frac{24}{m-1}$. Let $N^{(m)}$ be the corresponding Niemeier lattice, and set $G^{(m)}:=\Aut(N^{(m)})/\Inn(N^{(m)})$ where $\Inn(N^{(m)})$ is the subgroup of $\Aut(N^{(m)})$ generated by reflections in root vectors. Then umbral moonshine \cite{UM,MUM,mumcor} attaches a $2m$-vector-valued mock modular form $H^{(m)}_g(\tau)=(H^{(m)}_{g,r}(\tau))_{r\xmod 2m}$ to each $g\in G^{(m)}$. 

One way to explain what this means is as follows. Let $\HH:=\{\tau\in\CC\mid \Im(\tau)>0\}$ denote the upper half-plane, and set $S:=\{(\tau,a\tau+b)\in\HH\times \CC\mid a,b\in\ZZ\}$. Define functions $\mu_{m,0}^k$ on $\HH\times \CC\setminus S$ for $k\xmod 2$ by setting $\mu_{m,0}^k(\tau,z):=\frac12(\mu_{m,0}(\tau,z)+(-1)^k\mu_{m,0}(\tau,z+\frac12))$, where 
\begin{gather}\label{eqn:jf-mum0}
	\mu_{m,0}(\tau,z):=\sum_{\ell\in\ZZ}y^{2m\ell}q^{m\ell^2}\frac{yq^\ell+1}{yq^\ell-1}
\end{gather}
for $y=e^{2\pi i z}$ and $q=e^{2\pi i\tau}$. Also define $\theta_{m,r}(\tau,z):=\sum_{\ell=r\xmod 2m}y^\ell q^{\frac{\ell^2}{4m}}$ for $r\xmod 2m$. Then for $\bar\chi_g^{(m)}$ and ${\chi}^{(m)}_g$ the characters of $G^{(m)}$ defined in \S B.2 of \cite{UM} or \cite{MUM} (or \S C of this work, for $m\in\{4,5,7,13\}$), the function 
\begin{gather}\label{eqn:jf:psig}
	\psi^{(m)}_g(\tau,z):=
	-\chi^{(m)}_g\mu_{m,0}^0(\tau,z)
	-\bar\chi^{(m)}_g\mu_{m,0}^1(\tau,z)
	+\sum_{r\xmod 2m}H^{(m)}_{g,r}(\tau)\theta_{m,r}(\tau,z)
\end{gather}
is a meromorphic Jacobi form with simple poles in $\ZZ\tau+\ZZ\frac12$. That is to say, we have $\psi^{(m)}_g=\frac{\phi_1}{\phi_2}$ for some (holomorphic) Jacobi forms $\phi_1$ and $\phi_2$, and for any fixed $\tau\in \HH$, the function $z\mapsto \psi^{(m)}_g(\tau,z)$ is meromorphic on $\CC$. Moreover, its poles are simple, and lie within the lattice $\ZZ\tau+\ZZ\frac12$. 

In the next section we will recover series expansions of the functions $\psi^{(m)}_g$ as traces on twisted modules for explicitly constructed super vertex algebras, for all $g\in G^{(m)}$ for $m=7$ (see \S\ref{sec:moon:7}) and $m=13$ (see \S\ref{sec:moon:13}), and for all $g$ in a maximal subgroup of $G^{(m)}$ for $m=4$ (see \S\ref{sec:moon:4}) and $m=5$ (see \S\ref{sec:moon:5}). This will solve the meromorphic module problem for umbral moonshine for the root systems $A_6^{\oplus 4}$ and $A_{12}^{\oplus 2}$, and partially solve it for $A_3^{\oplus 8}$ and $A_4^{\oplus 6}$.

\section{Moonshine Modules}\label{sec:moon}

We now present our main constructions. 

\subsection{Lambency Seven}\label{sec:moon:7}

Let $\gt{e}$ and $\gt{a}$ be $2$-dimensional complex vector spaces equipped with non-degenerate symmetric bilinear forms, and let 
$\gt{b}$ be a $4$-dimensional complex vector space equipped with a non-degenerate anti-symmetric bilinear form. 
Fix polarizations $\gt{e}=\gt{e}^+\oplus\gt{e}^-$, $\gt{a}=\gt{a}^+\oplus \gt{a}^-$ and $\gt{b}=\gt{b}^+\oplus\gt{b}^-$, and let $\{e^\pm\}$, $\{a^\pm\}$ and $\{b_i^\pm\}$ be bases for $\gt{e}^\pm$, $\gt{a}^\pm$ and $\gt{b}^\pm$, respectively, such that $\lab e^-,e^+\rab=\lab a^-,a^+\rab=1$ and $\llab b_i^-,b_j^+\rab=\delta_{i,j}$. 
Applying the constructions of \S\ref{sec:va} we obtain a super vertex operator algebra $W^{(7)}$, and a canonically twisted module for it $W^{(7)}_\tw$ by setting
\begin{gather}\label{eqn:moon:7-WWtw}
\begin{split}
	W^{(7)}&:=A(\gt{e})\otimes A(\gt{a})\,\otimes\uA(\gt{b}), \\
	W^{(7)}_\tw&:=A(\gt{e})_\tw\otimes A(\gt{a})_\tw\,\otimes \uA(\gt{b})_\tw,
\end{split}
\end{gather}
and equipping $W^{(7)}$ with the usual tensor product Virasoro element $\omega^{(7)}:=\omega\otimes\vv\otimes\vv+\vv\otimes\omega\otimes\vv+\vv\otimes\vv\otimes\omega$. To define bigradings on both spaces we set 
\begin{gather}
\jmath^{(7)}:=4\vv\otimes\jmath\otimes\vv+\vv\otimes\vv\otimes \jmath
\end{gather}
where $\jmath$ is defined for $A(\gt{a})$ and $\uA(\gt{b})$ as in \S\ref{sec:va}. .
We also define $\jmath_\gt{e}:=\jmath\otimes\vv\otimes\vv$.
Then $\GL(\gt{e}^+)\otimes \GL(\gt{a}^+)\otimes\GL(\gt{b}^+)$ acts naturally on $W^{(7)}$ and $W^{(7)}_\tw$, respecting the super vertex operator algebra module structures and preserving the bigradings. 

\begin{table}[ht]
\begin{center}
\caption{Eigenvalues for $G^{(7)}$}
\smallskip
\begin{small}
\begin{tabular}{ c |   cc } \toprule\label{tab:moon:7-evals}
    $[g]$ & $\lambda$ &$\{\ulambda_j\}$ \\ \midrule
    1A & $1$& $\{1,1\}$\\ 
    2A &  $1$& $\{-1,-1\}$\\ 
    4A & $1$&$\{i,-i\}$\\
    3A & $\omega$& $\{1,\omega\}$\\ 
    6A &$\omega^2$& $\{-1,-\omega^2\}$\\
    3B & $\omega^2$& $\{1,\omega^2\}$\\ 
    6B &$\omega$& $\{-1, -\omega\}$\\
   \bottomrule   
\end{tabular}
\end{small}
\end{center}
\end{table}

The character table of the umbral group $G^{(7)}$ is Table \ref{tab:irr:7} in \S\ref{sec:tab}. Choose homomorphisms $\varrho:G^{(7)}\to \GL(\gt{a}^+)$ and $\uvarrho\;\;:G^{(7)}\to \GL(\gt{b}^+)$ such that the corresponding characters are $\chi_2$ and $\chi_6$ in Table \ref{tab:irr:7}, respectively. 
Since $\chi_6$ is faithful the assignment $g\mapsto I\otimes \varrho(g)\,\otimes \uvarrho(g)$ defines faithful actions of $G^{(7)}$ on $W^{(7)}$ and $W^{(7)}_\tw$. Set $(-1)^F:=(-I)\otimes(-I)\otimes I$, and let $J_\gt{e}(0)$ denote the coefficient of $z^{-1}$ in $Y_\tw(\jmath_\gt{e},z)$. Let $J(0)$ be the coefficient of $z^{-1}$ in $Y_\tw(\jmath^{(7)},z)$, and let $L(0)$ be the coefficient of $z^{-2}$ in $Y_\tw(\omega^{(7)},z)$. For $g\in G^{(7)}$ define a formal series $\widetilde{\psi}^{(7)}_g\in \CC[y][[y^{-1}]][[q]]$ by setting
\begin{gather}\label{eqn:moon:7-trg}
	\widetilde{\psi}^{(7)}_g:=-\tr((g+g^{-1})J_\gt{e}(0)(-1)^Fy^{J(0)}q^{L(0)}|W^{(7)}_\tw).
\end{gather}

\begin{theorem}\label{thm:moon:7-psig}
For $g\in G^{(7)}$ the series $\widetilde{\psi}^{(7)}_g$ is the expansion of $\psi^{(7)}_g$ in the domain $0<-\Im(z)<\Im(\tau)$.
\end{theorem}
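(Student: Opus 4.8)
The plan is to evaluate the trace (\ref{eqn:moon:7-trg}) directly as an infinite product and then match it, order by order in $q$, against the explicit closed form for $\psi^{(7)}_g$ recorded in \S\ref{sec:ujf}. The first step is to exploit the fact that every operator in (\ref{eqn:moon:7-trg}) respects the decomposition $W^{(7)}_\tw=A(\gt{e})_\tw\otimes A(\gt{a})_\tw\otimes\uA(\gt{b})_\tw$: the element $g=I\otimes\varrho(g)\otimes\uvarrho(g)$ acts trivially on the $\gt{e}$-factor, the current $J_\gt{e}(0)$ lives entirely on $A(\gt{e})$, the operator $(-1)^F$ is the product of the two Clifford fermion parities, and both gradings split as $L(0)=L_\gt{e}(0)+L_\gt{a}(0)+L_\gt{b}(0)$ and $J(0)=4J_\gt{a}(0)+J_\gt{b}(0)$. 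Since $(g+g^{-1})$ touches only the $\gt{a}$- and $\gt{b}$-factors, the trace factorizes as $-\,T_\gt{e}\bigl(T_\gt{a}(g)T_\gt{b}(g)+T_\gt{a}(g^{-1})T_\gt{b}(g^{-1})\bigr)$, where $T_\gt{e}=\tr_{A(\gt{e})_\tw}(J_\gt{e}(0)(-1)^{F_\gt{e}}q^{L_\gt{e}(0)})$, $T_\gt{a}(g)=\tr_{A(\gt{a})_\tw}(\varrho(g)(-1)^{F_\gt{a}}y^{4J_\gt{a}(0)}q^{L_\gt{a}(0)})$, and $T_\gt{b}(g)=\tr_{\uA(\gt{b})_\tw}(\uvarrho(g)y^{J_\gt{b}(0)}q^{L_\gt{b}(0)})$.

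The second step is to compute each single-factor trace as an explicit product by inserting the eigenvalues of Table \ref{tab:moon:7-evals} into equivariant refinements of the bigraded dimension formulas of \S\ref{sec:va}. Absorbing the $\gt{a}$-fermion parity into $\varrho(g)$ replaces its eigenvalue on $\gt{a}^+$ by $-\lambda$, so $T_\gt{a}(g)$ is a theta-type product in $-\lambda$, $y^4$ and $q$, while $T_\gt{b}(g)$ is the Weyl product (\ref{eqn:va:weyl-bgdimtw}) refined by the two eigenvalues $\ulambda_1,\ulambda_2$ of $\uvarrho(g)$; the reciprocal factors there, in particular the $q^0$ contribution $(1-y^{-1})^{-1}$, are what carry the poles. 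The factor $T_\gt{e}$ is a parity-weighted supertrace with a current insertion, and I expect the charge-weighted oscillator contributions to cancel by the $\gt{e}^+\leftrightarrow\gt{e}^-$ symmetry, leaving only the two-dimensional ground space; the upshot should be an elementary factor (a power of $q$ times $\prod_{n>0}(1-q^n)^2$, essentially $\eta(\tau)^2$) which is the mechanism by which the current insertion raises the modular weight from zero to one. The zero-mode contributions of the Clifford factors, assembled with $T_\gt{b}(g)$, are what should reproduce the characters $\chi^{(7)}_g$ and $\bar\chi^{(7)}_g$ multiplying $\mu_{7,0}^0$ and $\mu_{7,0}^1$ in (\ref{eqn:jf:psig}); the symmetrization $g\mapsto g+g^{-1}$ is what renders the output a genuine class function of $G^{(7)}$.

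The third step is to combine the three products into a single theta-quotient and identify it with $\psi^{(7)}_g$. The cleanest route is a class-by-class comparison with \S\ref{sec:ujf}: for each conjugacy class one substitutes the corresponding triple $(\lambda,\ulambda_1,\ulambda_2)$ from Table \ref{tab:moon:7-evals} and checks that the resulting product coincides with the tabulated expression. Finally I must pin down the expansion region. A meromorphic Jacobi form with poles on $\ZZ\tau+\ZZ\frac12$ has distinct Fourier expansions in the strips cut out by these poles, and (\ref{eqn:moon:7-trg}) selects one of them: the Weyl reciprocal factors must be expanded as geometric series in the variable that is small on $0<-\Im(z)<\Im(\tau)$ (namely $y^{-1}$, and $yq^n$ for $n\ge1$), which is precisely the content of the membership $\widetilde\psi^{(7)}_g\in\CC[y][[y^{-1}]][[q]]$. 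I would then check that this convention places the poles $z\in\ZZ\tau+\ZZ\frac12$ on the boundary of the strip and that the formal product converges there to $\psi^{(7)}_g$.

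I expect the main obstacle to be this last matching step: reconciling the bare theta-quotient produced by the trace with the $\mu_{7,0}$-plus-$\sum\theta_{7,r}H^{(7)}_{g,r}$ decomposition (\ref{eqn:jf:psig}) of $\psi^{(7)}_g$, and in particular keeping precise track of the correct expansion strip so that the polar terms $\chi^{(7)}_g\mu_{7,0}^0+\bar\chi^{(7)}_g\mu_{7,0}^1$ are reproduced with the right characters. Once the equivariant characters are in hand, the factorization and the individual product evaluations are essentially bookkeeping.
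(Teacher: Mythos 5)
Your proposal is correct and takes essentially the same route as the paper's proof: factor the trace over the tensor decomposition, evaluate it as an explicit eigenvalue-decorated product (the paper's (\ref{eqn:moon:7-psitildeprod})), observe that the $\CC[y][[y^{-1}]][[q]]$ expansion converges exactly on the strip $0<-\Im(z)<\Im(\tau)$, and then verify class-by-class against the theta-quotient expressions of \S\ref{sec:ujf}, using Table \ref{tab:moon:7-evals}. Your closing worry about reconciling the trace with the decomposition (\ref{eqn:jf:psig}) is moot: the closed forms (\ref{eqn:exp:4A6-psiXg}) are recalled from \cite{umrec}, where they are already identified with the $\psi^{(7)}_g$, so the paper---like your ``cleanest route''---compares directly with those tabulated quotients and never needs to re-derive the $\mu_{7,0}$-plus-theta decomposition.
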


\begin{proof}
Let $g\in G^{(7)}$. The action of $g$ on $\gt{a}^+$ is multiplication by a scalar, $\lambda$ say, and there are a pair of eigenvalues $\{\ulambda_1,\ulambda_2\}$ for its action on $\gt{b}^+$. With this notation we have
\begin{gather}	
	\begin{split}
	\label{eqn:moon:7-psitildeprod}
\widetilde\psi^{(7)}_g= &-y 
\prod_{n>0} 
	\frac {(1-q^n)^2(1-\bar\lambda y^{-4}q^{n-1})(1-  \lambda y^{4}q^n)} 
	{\prod_{j=1}^2(1-\bar\ulambda_j  y^{-1} q^{n-1})(1- \ulambda_j \!y q^n)}\\
&-y
\prod_{n>0} 
	\frac {(1-q^n)^2(1- \lambda y^{-4}q^{n-1})(1- \bar\lambda y^{4}q^n)} 
	{\prod_{j=1}^2(1-\ulambda_j\!y^{-1} q^{n-1})(1- \bar\ulambda_j y q^n)}
		\end{split}
\end{gather}
where $(1-X)^{-1}$ is shorthand for $\sum_{k\geq 0}X^k$. This series converges in the given domain once we substitute $q=e^{2\pi i \tau}$ and $y=e^{2\pi i z}$, so we require to check that the right-hand side of (\ref{eqn:moon:7-psitildeprod}) agrees with the meromorphic Jacobi form $\psi^{(7)}_g$ when viewed as a function of $\tau$ and $z$. This follows from a case by case check using the values of $\lambda$ and $\ulambda_j$ in Table \ref{tab:moon:7-evals} and the explicit descriptions of the $\psi^{(7)}_g$ in (\ref{eqn:exp:4A6-psiXg}). For example, for $g\in 4A$ the right-hand side of (\ref{eqn:moon:7-psitildeprod}) becomes
\begin{gather}
	\label{eqn:moon:7-psitildeprod4A}
-2y 
\prod_{n>0} 
	\frac {(1-q^n)^2(1-y^{-4}q^{n-1})(1- y^{4}q^n)} 
	{(1+ y^{-2} q^{2n-2})(1+y^2 q^{2n})}
=-2i\frac{\eta(2\tau)\eta(\tau)\theta_1(\tau,4z)}{\theta_2(2\tau,2z)}
\end{gather}
which 
is exactly the expression for $\psi^{(7)}_{4A}$ that appears in (\ref{eqn:exp:4A6-psiXg}). The other cases are similar.
\end{proof}

\subsection{Lambency Thirteen}\label{sec:moon:13}

Let $\gt{e}$ and $\gt{a}$ be $2$-dimensional complex vector spaces equipped with non-degenerate symmetric bilinear forms, and let 
$\gt{b}$ and $\gt{b}'$ be $2$-dimensional complex vector spaces equipped with non-degenerate anti-symmetric bilinear forms. 
Fix polarizations $\gt{e}=\gt{e}^+\oplus\gt{e}^-$, $\gt{a}=\gt{a}^+\oplus\gt{a}^-$, $\gt{b}=\gt{b}^+\oplus\gt{b}^-$ and $\gt{b}'={\gt{b}'}^{+}\oplus {\gt{b}'}^-$, and let $\{e^\pm\}$, $\{a^\pm\}$, $\{b^\pm\}$ and $\{{b'}^{\pm}\}$ be bases for $\gt{e}^\pm$, $\gt{a}^\pm$, $\gt{b}^\pm$ and ${\gt{b}'}^\pm$, respectively, such that $\lab e^-,e^+\rab=\lab a^-,a^+\rab=\llab b^-,b^+\rab=\llab {b'}^-,{b'}^+\rab=1$.

Define a super vertex operator algebra $W^{(13)}$, and a canonically twisted $W^{(13)}$-module $W^{(13)}_\tw$ by setting
\begin{gather}\label{eqn:moon:13-WWtw}
\begin{split}
	W^{(13)}&:=A(\gt{e})\otimes A(\gt{a})\,\otimes\uA(\gt{b})\otimes \uA(\gt{b}'), \\
	W^{(13)}_\tw&:=A(\gt{e})_\tw\otimes A(\gt{a})_\tw\,\otimes \uA(\gt{b})_\tw\otimes \uA(\gt{b}')_\tw.
\end{split}
\end{gather}
Equip $W^{(13)}$ with the usual tensor product Virasoro element, denote it $\omega^{(13)}$, 
set $\jmath_\gt{e}:=\jmath\otimes\vv\otimes\vv\otimes\vv$, and set 
\begin{gather}
\jmath^{(13)}:=6\vv\otimes\jmath\otimes\vv\otimes\vv+\vv\otimes\vv\otimes \jmath\otimes\vv+3\vv\otimes\vv\otimes\vv\otimes\jmath.
\end{gather}
Then $\GL(\gt{e}^+)\otimes \GL(\gt{a}^+)\otimes\GL(\gt{b}^+)\otimes \GL({\gt{b}'}^+)$ acts naturally on $W^{(13)}$ and $W^{(13)}_\tw$ respecting the super vertex operator algebra module structures and preserving the bigradings. 

The 
umbral group $G^{(13)}$ is cyclic of order $4$. (Cf. Table \ref{tab:irr:13}.) Define compatible actions of $G^{(13)}$ on $W^{(13)}$ and $W^{(13)}_\tw$ by choosing a generator and mapping it to $I\otimes (-I)\otimes (iI)\otimes (-iI)$ in $\GL(\gt{e}^+)\otimes \GL(\gt{a}^+)\otimes\GL(\gt{b}^+)\otimes \GL({\gt{b}'}^+)$. Similar to \S\ref{sec:moon:7} we set
$(-1)^F:=(-I)\otimes(-I)\otimes I$, 
let $J_\gt{e}(0)$ denote the coefficient of $z^{-1}$ in $Y_\tw(\jmath_\gt{e},z)$, let $J(0)$ be the coefficient of $z^{-1}$ in $Y_\tw(\jmath^{(13)},z)$, and let $L(0)$ be the coefficient of $z^{-2}$ in $Y_\tw(\omega^{(13)},z)$. Then for $g\in G^{(13)}$ we define a formal series 
in $\CC[y][[y^{-1}]][[q]]$ by setting
\begin{gather}\label{eqn:moon:13-trg}
	\widetilde{\psi}^{(13)}_g:=-\tr((g+g^{-1})J_\gt{e}(0)(-1)^Fy^{J(0)}q^{L(0)}|W^{(13)}_\tw).
\end{gather}

\begin{theorem}\label{thm:moon:13-psig}
For $g\in G^{(13)}$ the series $\widetilde{\psi}^{(13)}_g$ is the expansion of $\psi^{(13)}_g$ in the domain $0<-\Im(z)<\Im(\tau)$.
\end{theorem}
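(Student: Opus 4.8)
The plan is to mirror the proof of Theorem~\ref{thm:moon:7-psig} exactly, since the structure of $W^{(13)}$ and $\widetilde\psi^{(13)}_g$ is entirely parallel. First I would compute the trace in (\ref{eqn:moon:13-trg}) as an explicit infinite product. The key observation is that $W^{(13)}_\tw$ is a tensor product of four canonically twisted free-field modules, so the trace factors across the four tensor factors. The element $J_\gt{e}(0)$ acts only on the $A(\gt{e})_\tw$ factor, and its role (exactly as in lambency seven) is to select, via the two-dimensional space $\gt{e}$ carrying the trivial $G^{(13)}$-action, the zero-mode structure that produces the overall factor of $-y$ together with the $(1-q^n)^2$ in the numerator; this uses the bigraded dimension formulas for $A(\gt{a})_\tw$ from \S\ref{sec:va:cliff} and for $\uA(\gt{b})_\tw$, $\uA(\gt{b}')_\tw$ from (\ref{eqn:va:weyl-bgdimtw}). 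Inserting $g+g^{-1}$ symmetrizes the eigenvalue contributions, producing two product terms as in (\ref{eqn:moon:7-psitildeprod}).

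Concretely, I would record that for a generator $g$ mapped to $I\otimes(-I)\otimes(iI)\otimes(-iI)$, the scalar by which $g$ acts on $\gt{a}^+$ is $\lambda=-1$ and the eigenvalues on $\gt{b}^+\oplus{\gt{b}'}^+$ are $\ulambda_1=i$, $\ulambda_2=-i$; the coefficients $6,1,3$ in $\jmath^{(13)}$ determine the powers of $y$ attached to each factor in the $J(0)$-grading. Tracking these weights through the bigraded dimension formulas yields a product expression of the same shape as (\ref{eqn:moon:7-psitildeprod}), now with the $\gt{a}$-factor contributing $y^{\pm 6}$ and the two Weyl factors contributing $y^{\pm 1}$ and $y^{\pm 3}$ respectively, with the appropriate character eigenvalues on each power of $q$. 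I would tabulate the resulting $\lambda$ and $\{\ulambda_j\}$ for each of the four conjugacy classes of the cyclic group $G^{(13)}$, analogous to Table~\ref{tab:moon:7-evals}.

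The remaining step is a case-by-case verification that, after substituting $q=e^{2\pi i\tau}$ and $y=e^{2\pi iz}$, each product equals the meromorphic Jacobi form $\psi^{(13)}_g$ on the domain $0<-\Im(z)<\Im(\tau)$, where the product converges. This is carried out by comparing against the explicit expressions for the $\psi^{(13)}_g$ recorded in \S\ref{sec:ujf}. Each comparison reduces to a standard Jacobi triple product identity rewriting the infinite product in terms of $\eta$, $\theta_1$, $\theta_2$, $\theta_3$ (and their arguments rescaled by $2$ or $3$), exactly as in the $4A$ computation (\ref{eqn:moon:7-psitildeprod4A}) of the previous subsection.

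The main obstacle I anticipate is bookkeeping rather than conceptual difficulty: one must correctly track how the three distinct $J(0)$-weights $6,1,3$ interact with the twisted gradings (where the $\gt{b}$-modes sit at integral rather than half-integral levels, shifting the $q$-exponents by the $q^{n-1}$ versus $q^n$ asymmetry visible in (\ref{eqn:moon:7-psitildeprod})), and then match the resulting product to the correct theta-quotient for each class. Unlike lambency seven, here two independent Weyl factors carry genuinely different $y$-weights, so the symmetrization under $g\mapsto g^{-1}$ must be handled with care to ensure the two summands in the analogue of (\ref{eqn:moon:7-psitildeprod}) pair the right eigenvalues; verifying this for the order-four element (where $\ulambda_j=\pm i$ forces the $y^{\pm 3}$ Weyl contributions to recombine) is the delicate point.
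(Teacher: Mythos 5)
Your proposal is correct and follows essentially the same route as the paper: the trace is expanded as the two-term infinite product in which the $\gt{a}$-factor carries $y^{\pm 6}$ and the two Weyl factors carry $y^{\pm 1}$ and $y^{\pm 3}$ with their own eigenvalues (the paper keeps three separate scalars $\lambda$, $\ulambda$, $\ulambdap$, which is exactly the eigenvalue--weight pairing you flag as the delicate point), and the result is then matched case by case against the explicit theta-quotients of \S\ref{sec:ujf}, with the involution class worked out as the sample computation.
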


\begin{proof}
Let $g\in G^{(13)}$. Then $g$ acts by scalar multiplication on $\gt{a}^+$, $\gt{b}^+$ and ${\gt{b}'}^+$. Let $\lambda$, $\ulambda$ and $\ulambdap$ be the respective scalars. Then we have
\begin{gather}	
	\begin{split}
	\label{eqn:moon:13-psitildeprod}
\widetilde\psi^{(13)}_g= 
&-y 
\prod_{n>0} 
	\frac 
	{ (1-q^n)^2(1-\bar\lambda y^{-6} q^{n-1})(1-\lambda y^6 q^n)} 
	{ (1-\bar \ulambda \,y^{-1}q^{n-1})(1- \ulambda yq^n) (1-\bar \ulambdap y^{-3}q^{n-1})(1- \ulambdap \!y^{3}q^n) }\\
&-y
\prod_{n>0} 
	\frac 
	{ (1-q^n)^2(1-\lambda y^{-6} q^{n-1})(1-\bar\lambda y^6 q^n)} 
	{ (1-\ulambda y^{-1}q^{n-1})(1- \bar\ulambda\, yq^n) (1- \ulambdap \!y^{-3}q^{n-1})(1- \bar\ulambdap y^{3}q^n) }
		\end{split}
\end{gather}
where, as before, $(1-X)^{-1}$ is shorthand for $\sum_{k\geq 0}X^k$. This convergence of this series, upon substituting 
$q=e^{2\pi i \tau}$ and $y=e^{2\pi i z}$, is the same as in Theorem \ref{thm:moon:7-psig}. So we just need to check that the right-hand side of (\ref{eqn:moon:7-psitildeprod}) agrees with the meromorphic Jacobi form $\psi^{(13)}_g$ when viewed as a function of $\tau$ and $z$. This follows from a case by case comparison with (\ref{eqn:exp:2A12-psiXg}). For example, for $g$ the involution in $G^{(13)}$ the right-hand side of (\ref{eqn:moon:13-psitildeprod}) becomes
\begin{gather}
	\label{eqn:moon:13-psitildeprod2A}
	\begin{split}
&-2y 
\prod_{n>0} 
	\frac {(1-q^n)^2(1-y^{-6}q^{n-1})(1- y^{6}q^n)} 
	{(1+ y^{-1} q^{n-1})(1+y q^{n})(1+ y^{-3} q^{n-1})(1+y^3 q^{n})}\\
=&-2i\frac{\eta(\tau)^3\theta_1(\tau,6z)}{\theta_2(\tau,z)\theta_2(\tau,3z)}
	\end{split}
\end{gather}
which 
is precisely $\psi^{(13)}_{2A}$ as it appears in (\ref{eqn:exp:2A12-psiXg}). We leave the remaining cases to the reader.
\end{proof}

\subsection{Lambency Four}\label{sec:moon:4}

This section and the next are similar to the previous two, except that we realize umbral moonshine only for maximal subgroups $G^{(4)}_{336}$ and $G^{(5)}_{24}$ of the umbral groups $G^{(4)}$ and $G^{(5)}$. 

For $\ell=4$ let $\gt{e}$ be just as in \S\S\ref{sec:moon:7},\ref{sec:moon:13}, let $\gt{a}$ be a $6$-dimensional complex vector space equipped with a non-degenerate symmetric bilinear form, and let $\gt{b}$ be an $8$-dimensional complex vector space equipped with a non-degenerate anti-symmetric bilinear form. Choose polarizations $\gt{e}=\gt{e}^+\oplus\gt{e}^-$, $\gt{a}=\gt{a}^+\oplus\gt{a}^-$ and $\gt{b}=\gt{b}^+\oplus\gt{b}^-$, and let $\{e^\pm\}$, $\{a^\pm_i\}$ and $\{b_i^\pm\}$ be bases for $\gt{e}^\pm$, $\gt{a}^\pm$ and $\gt{b}^\pm$, respectively, such that $\lab e^-,e^+\rab=1$ and $\lab a_i^-,a_j^+\rab=\llab b_i^-,b_j^+\rab=\delta_{i,j}$.
Define a super vertex operator algebra and a canonically twisted module for it by setting
\begin{gather}\label{eqn:moon:4-WWtw}
\begin{split}
	W^{(4)}&:=A(\gt{e})\otimes A(\gt{a})\,\otimes\uA(\gt{b}), \\
	W^{(4)}_\tw&:=A(\gt{e})_\tw\otimes A(\gt{a})_\tw\,\otimes \uA(\gt{b})_\tw,
\end{split}
\end{gather}
and let $\omega^{(4)}$ denote the (tensor product) Virasoro element for $W^{(4)}$. 
Set $\jmath_\gt{e}:=\jmath\otimes\vv\otimes\vv$ and
$\jmath^{(4)}:=2\vv\otimes\jmath\otimes\vv+\vv\otimes\vv\otimes \jmath$.
As in \S\ref{sec:moon:7}, the group $\GL(\gt{e}^+)\otimes \GL(\gt{a}^+)\otimes\GL(\gt{b}^+)$ acts naturally on $W^{(4)}$ and $W^{(4)}_\tw$, respecting the super vertex operator algebra module structures and preserving the bigradings defined by the zero modes of $\omega^{(4)}$ and $\jmath^{(4)}$.

\begin{table}[ht]
\begin{center}
\caption{Eigenvalues for $\ell=4$}
\smallskip
\begin{small}
\begin{tabular}{ c | c  c c} \toprule\label{tab:moon:4-evals}
    $[g]$ & $\{\lambda_i\}$ & $\{\ulambda_j\}$ \\ \midrule
    1A & $\{1,1,1\}$ & $\{1,1,1,1\}$\\ 
    2A & $\{1,1,1\}$ & $\{-1,-1,-1,-1\}$\\ 
    4A & $\{1,-1,-1\}$&$\{\ii,\ii,-\ii,-\ii\}$\\
    3A & $\{1,\omega,\omega^2\}$ & $\{1,1,\omega,\omega^2\}$\\ 
    6A & $\{1,\omega,\omega^2\}$ & $\{-1,-1,-\omega,-\omega^2\}$\\
    8A & $\{1,\ii,-\ii\}$ & $\{\zeta_8,\zeta_8^3,\zeta_8^5,\zeta_8^7\}$\\ 
    7A & $\{\zeta_7,\zeta_7^2,\zeta_7^4\}$ & $\{1,\zeta_7,\zeta_7^2,\zeta_7^4\}$\\
    7B & $\{\zeta_7^3,\zeta_7^5,\zeta_7^6\}$ & $\{1,\zeta_7^3,\zeta_7^5,\zeta_7^6\}$\\
    14A & $\{\zeta_7,\zeta_7^2,\zeta_7^4\}$ & $\{-1,-\zeta_7,-\zeta_7^2,-\zeta_7^4\}$\\ 
    14B & $\{\zeta_7^3,\zeta_7^5,\zeta_7^6\}$ & $\{-1,-\zeta_7^3,-\zeta_7^5,-\zeta_7^6\}$\\ 
    \bottomrule   
\end{tabular}
\end{small}
\end{center}
\end{table}

We write $G^{(4)}_{336}$ for a subgroup of $G^{(4)}$ isomorphic to $\SL_2(7)$. Such subgroups are maximal and unique up to conjugacy, but note that there are two other conjugacy classes of maximal subgroups of order $336$. The character tables of $G^{(4)}$ and $G^{(4)}_{336}$ are Tables \ref{tab:chars:irr:4} and \ref{tab:chars:irr:4_336}, respectively. Table \ref{tab:chars:irr:4_336} also gives the fusion of conjugacy classes with respect to an embedding $\iota: G^{(4)}_{336}\to G^{(4)}$.

Choose homomorphisms $\varrho:G^{(4)}_{336}\to \GL(\gt{a}^+)$ and $\uvarrho\;\;:G^{(4)}_{336}\to \GL(\gt{b}^+)$ such that the corresponding characters are $\chi_2$ and $\chi_8$ in Table \ref{tab:chars:irr:4_336}, respectively. 
Then the assignment $g\mapsto I\otimes \varrho(g)\,\otimes \uvarrho(g)$ defines faithful and compatible actions of $G^{(4)}_{336}$ on $W^{(4)}$ and $W^{(4)}_\tw$. Define $(-1)^F$, $J_\gt{e}(0)$, $J(0)$ and $L(0)$ just as in \S\S\ref{sec:moon:7},\ref{sec:moon:13}. For $g\in G^{(4)}_{336}$ we consider the formal series $\widetilde{\psi}^{(4)}_g$ defined in direct analogy with (\ref{eqn:moon:7-trg}) and (\ref{eqn:moon:13-trg}),
\begin{gather}\label{eqn:moon:4-trg}
	\widetilde{\psi}^{(4)}_g:=-\tr((g+g^{-1})J_\gt{e}(0)(-1)^Fy^{J(0)}q^{L(0)}|W^{(4)}_\tw).
\end{gather}

\begin{proposition}\label{prop:moon:4-psig}
For $g\in G^{(4)}_{336}$ the series $\widetilde{\psi}^{(4)}_g$ is the expansion of $\psi^{(4)}_g$ in the domain $0<-\Im(z)<\Im(\tau)$.
\end{proposition}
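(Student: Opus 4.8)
The plan is to follow closely the template of the proofs of Theorems \ref{thm:moon:7-psig} and \ref{thm:moon:13-psig}: first derive a product formula for $\widetilde\psi^{(4)}_g$ generalizing (\ref{eqn:moon:7-psitildeprod}), and then verify case by case that it reproduces the meromorphic Jacobi form $\psi^{(4)}_g$. The derivation rests on the observation that the trace (\ref{eqn:moon:4-trg}) factors over the tensor decomposition $W^{(4)}_\tw=A(\gt{e})_\tw\otimes A(\gt{a})_\tw\otimes\uA(\gt{b})_\tw$, since $\omega^{(4)}$, $\jmath^{(4)}$, $\jmath_\gt{e}$, $(-1)^F=(-I)\otimes(-I)\otimes I$ and the $G^{(4)}_{336}$-action (which is trivial on the $\gt{e}$-factor) all respect this decomposition. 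Each tensor factor then contributes an eigenvalue-refined version of the corresponding bigraded dimension from \S\ref{sec:va}.

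Concretely, the symmetric factor $A(\gt{a})_\tw$, whose modes carry $y$-weight $2$ and on which $(-1)^F$ acts by $-I$, contributes $\prod_{i=1}^3(1-\bar\lambda_i y^{-2}q^{n-1})(1-\lambda_i y^2q^n)$ to the numerator, where $\{\lambda_i\}$ are the eigenvalues of $g$ on $\gt{a}^+$; the anti-symmetric factor $\uA(\gt{b})_\tw$, carrying $y$-weight $1$ with $(-1)^F$ acting trivially, contributes $\prod_{j=1}^4(1-\bar\ulambda_j y^{-1}q^{n-1})(1-\ulambda_j yq^n)$ to the denominator, where $\{\ulambda_j\}$ are the eigenvalues of $g$ on $\gt{b}^+$. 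The factor $A(\gt{e})_\tw$ requires the insertion of $J_\gt{e}(0)$: its $(-1)^F$-twisted graded trace has the form $(u^{1/2}-u^{-1/2})\prod_{n>0}(1-uq^n)(1-u^{-1}q^n)$, where $u$ tracks $J_\gt{e}(0)$; this expression vanishes at $u=1$, and inserting $J_\gt{e}(0)$ amounts to applying $u\partial_u$ and setting $u=1$, which is exactly what compensates for the vanishing and extracts $\prod_{n>0}(1-q^n)^2$. Combining these, and using $g+g^{-1}$ to symmetrize $\{\lambda_i,\ulambda_j\}$ with its complex conjugate, I would obtain
\begin{gather*}
\widetilde\psi^{(4)}_g=
-y\prod_{n>0}\frac{(1-q^n)^2\prod_{i=1}^3(1-\bar\lambda_i y^{-2}q^{n-1})(1-\lambda_i y^2q^n)}{\prod_{j=1}^4(1-\bar\ulambda_j y^{-1}q^{n-1})(1-\ulambda_j yq^n)}\\
-y\prod_{n>0}\frac{(1-q^n)^2\prod_{i=1}^3(1-\lambda_i y^{-2}q^{n-1})(1-\bar\lambda_i y^2q^n)}{\prod_{j=1}^4(1-\ulambda_j y^{-1}q^{n-1})(1-\bar\ulambda_j yq^n)},
\end{gather*}
with the overall sign coming from the definition of $\widetilde\psi^{(4)}_g$ and the prefactor $-y$ from the combined ground-state contributions; convergence in the stated domain is identical to that in Theorem \ref{thm:moon:7-psig}.

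It then remains to match the right-hand side with $\psi^{(4)}_g$ for each conjugacy class of $G^{(4)}_{336}$. For this I would substitute the eigenvalue data $\{\lambda_i\}$, $\{\ulambda_j\}$ from Table \ref{tab:moon:4-evals}, use the fusion of conjugacy classes recorded in Table \ref{tab:chars:irr:4_336} to identify the image $\iota(g)\in G^{(4)}$, and compare with the explicit expressions for the $\psi^{(4)}_g$ collected in \S\ref{sec:ujf}. As in the worked cases (\ref{eqn:moon:7-psitildeprod4A}) and (\ref{eqn:moon:13-psitildeprod2A}), each comparison reduces the product to a finite expression in $\eta$ and Jacobi theta functions.

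The main obstacle is this final verification rather than the derivation of the product formula, which is routine given the template. Since $G^{(4)}_{336}\simeq\SL_2(7)$ has ten conjugacy classes, several involving seventh and eighth roots of unity (classes 7A, 7B, 8A, 14A, 14B), there are correspondingly more cases to dispatch than for $G^{(7)}$, and the requisite theta-quotient identities are less transparent. Moreover, because we work with a proper maximal subgroup, the argument recovers only those $\psi^{(4)}_g$ with $g$ in the image of $\iota$; confirming which classes of $G^{(4)}$ are thereby realized---and hence the precise sense in which this is a partial solution---is an essential part of establishing the statement.
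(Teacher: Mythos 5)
Your proposal is correct and follows essentially the same route as the paper's proof: the paper likewise writes down exactly your product formula for $\widetilde\psi^{(4)}_g$ (its equation (\ref{eqn:moon:4-psitildeprod})) by appeal to the argument of Theorem \ref{thm:moon:7-psig}, and then reduces the claim to a case-by-case comparison of the eigenvalue data in Table \ref{tab:moon:4-evals} with the explicit theta-quotient expressions in (\ref{eqn:exp:8A3-psiXg}). Your elaborations---the factorization of the trace over the tensor decomposition, the $u\partial_u$ extraction of $\prod_{n>0}(1-q^n)^2$ from the $\gt{e}$-factor via the $J_\gt{e}(0)$ insertion, and the use of the fusion data in Table \ref{tab:chars:irr:4_336}---are correct fillings-in of details the paper leaves implicit.
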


\begin{proof}
The proof is very similar to that of Theorem \ref{thm:moon:7-psig}.
Let $g\in G^{(4)}_{336}$. Let $\{\lambda_i\}$ and $\{\ulambda_j\}$ be the eigenvalues for the actions of $g$ on $\gt{a}^+$ and $\gt{b}^+$, respectively. Then we have
\begin{gather}	
	\begin{split}
	\label{eqn:moon:4-psitildeprod}
\widetilde\psi^{(4)}_g= &-y 
\prod_{n>0} 
	\frac {(1-q^n)^2\prod_{i=1}^3(1-\bar \lambda_i y^{-2}q^{n-1})(1- \lambda_i y^{2}q^n)} 
	{\prod_{j=1}^4(1-\bar\ulambda_j y^{-1} q^{n-1})(1-\ulambda_j \!y q^n)}\\
&-y
\prod_{n>0} 
	\frac {(1-q^n)^2\prod_{i=1}^3(1-\lambda_i y^{-2}q^{n-1})(1- \bar\lambda_i y^{2}q^n)} 
	{\prod_{j=1}^4(1-\ulambda_j \!y^{-1} q^{n-1})(1-\bar\ulambda_j y q^n)}.
		\end{split}
\end{gather}
As in the proof of Theorem \ref{thm:moon:7-psig} we just require to check that the right-hand side of (\ref{eqn:moon:4-psitildeprod}) agrees with the meromorphic Jacobi form $\psi^{(4)}_g$ when viewed as a function of $\tau$ and $z$, and we achieve this by comparing in each case the values of $\lambda_i$ and $\ulambda_j$ in Table \ref{tab:moon:4-evals} with the explicit descriptions of the $\psi^{(4)}_g$ in (\ref{eqn:exp:8A3-psiXg}). 
\end{proof}

\subsection{Lambency Five}\label{sec:moon:5}

Let $\gt{e}$, $\gt{a}$ and $\gt{a}'$ be $2$-dimensional complex vector spaces equipped with non-degenerate symmetric bilinear forms, and let 
$\gt{b}$ be a $6$-dimensional complex vector space equipped with a non-degenerate anti-symmetric bilinear form. 
Fix polarizations $\gt{e}=\gt{e}^+\oplus\gt{e}^-$, $\gt{a}=\gt{a}^+\oplus\gt{a}^-$, $\gt{a}'={\gt{a}'}^+\oplus{\gt{a}'}^-$ and $\gt{b}=\gt{b}^+\oplus \gt{b}^-$, and let $\{e^\pm\}$, $\{a^\pm\}$, $\{{a'}^\pm\}$ and $\{b^{\pm}_i\}$ be bases for $\gt{e}^\pm$, $\gt{a}^\pm$, ${\gt{a}'}^\pm$ and $\gt{b}^\pm$, respectively, such that $\lab e^-,e^+\rab=\lab a^-,a^+\rab=\lab {a'}^-,{a'}^+\rab=1$ and $\llab b^-_i,b^+_j\rab=\delta_{i,j}$.
Similar to (\ref{eqn:moon:7-WWtw}), (\ref{eqn:moon:13-WWtw}) and (\ref{eqn:moon:4-WWtw}) we define a super vertex operator algebra 
and a canonically twisted 
module for it by setting
\begin{gather}\label{eqn:moon:5-WWtw}
\begin{split}
	W^{(5)}&:=A(\gt{e})\otimes A(\gt{a})\,\otimes A(\gt{a}')\,\otimes \uA(\gt{b}), \\
	W^{(5)}_\tw&:=A(\gt{e})_\tw\otimes A(\gt{a})_\tw\,\otimes A(\gt{a}')_\tw\,\otimes \uA(\gt{b}')_\tw.
\end{split}
\end{gather}
Equip $W^{(5)}$ with the usual tensor product Virasoro element, 
set $\jmath_\gt{e}:=\jmath\otimes\vv\otimes\vv\otimes\vv$ and 
$\jmath^{(5)}:=2\vv\otimes\jmath\otimes\vv\otimes\vv+3\vv\otimes\vv\otimes \jmath\otimes\vv+\vv\otimes\vv\otimes\vv\otimes\jmath$.
Then $\GL(\gt{e}^+)\otimes \GL(\gt{a}^+)\otimes\GL({\gt{a}'}^+)\otimes \GL(\gt{b}^+)$ acts naturally on $W^{(5)}$ and $W^{(5)}_\tw$, respecting the super vertex operator algebra module structures and preserving the bigradings defined by the Virasoro element and $\jmath^{(5)}$. 

\begin{table}[ht]
\begin{center}
\caption{Eigenvalues for $\ell=5$}
\smallskip
\begin{small}
\begin{tabular}{ c | c  cc } \toprule\label{tab:moon:5-evals}
    $[g]$ & $\lambda$ & $\lambda'$ &$\{\mu_j\}$ \\ \midrule
    1A &$1$ &$1$& $\{1,1,1\}$\\ 
    2A & $1$ &$-1$& $\{-1,-1,-1\}$\\ 
    2B & $1$&$1$& $\{1,-1,-1\}$\\
    2C & $1$&$-1$& $\{1,1,-1\}$\\
    3A & $1$&$1$& $\{1,\omega,\omega^2\}$\\ 
    6A & $1$&$-1$& $\{-1,-\omega,-\omega^2\}$\\
    4A &$-1$ &$\ii$&$\{\ii,-\ii,-\ii\}$\\
    4B &$-1$ &$-\ii$&$\{\ii,\ii,-\ii\}$\\
    12A &$-1$ &$\ii$&$\{-\ii\omega,\ii,-\ii\omega^2\}$ \\
    12B & $-1$&$-\ii$&$\{\ii\omega,-\ii,\ii\omega^2\}$ \\\bottomrule
\end{tabular}
\end{small}
\end{center}
\end{table}

There 
is a unique conjugacy class of maximal subgroups of $G^{(5)}$ with order $24$. We choose a subgroup in this class, denote it $G^{(5)}_{24}$, and let $g\mapsto \iota g$ denote the inclusion $G^{(5)}_{24}\to G^{(5)}$.
The character tables of $G^{(5)}$ and $G^{(5)}_{24}$ are Tables \ref{tab:chars:irr:5} and \ref{tab:irr:5_24}, respectively, and Table \ref{tab:irr:5_24} gives the fusion of conjugacy classes under $g\mapsto \iota g$.
Choose homomorphisms $\varrho:G^{(5)}_{24}\to \GL(\gt{a}^+)$, $\varrho':G^{(5)}\to \GL({\gt{a}'}^+)$ and $\uvarrho\;\,:G^{(5)}\to \GL(\gt{b}^+)$ such that the corresponding characters are $\chi_2$, $\chi_3$ and $\chi_7+\chi_{12}$ in Table \ref{tab:irr:5_24}, respectively. Define 
$(-1)^F$, $J_\gt{e}(0)$, $J(0)$ and $L(0)$ as in \S\S\ref{sec:moon:7}-\ref{sec:moon:4}, and to $g\in G^{(5)}_{24}$ attach the formal series 
\begin{gather}\label{eqn:moon:5-trg}
	\widetilde{\psi}^{(5)}_g:=-\tr((g+g^{-1})J_\gt{e}(0)(-1)^Fy^{J(0)}q^{L(0)}|W^{(5)}_\tw).
\end{gather}

\begin{proposition}\label{prop:moon:5-psig}
For $g\in G^{(5)}$ the series $\widetilde{\psi}^{(5)}_g$ is the expansion of $\psi^{(5)}_g$ in the domain $0<-\Im(z)<\Im(\tau)$.
\end{proposition}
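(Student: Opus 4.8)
The plan is to mirror the proof of Theorem~\ref{thm:moon:7-psig} almost line for line, the only genuinely new features being the number of tensor factors and their $\jmath^{(5)}$-weights. Fix $g\in G^{(5)}_{24}$ and, following Table~\ref{tab:moon:5-evals}, write $\lambda$ and $\lambda'$ for the scalars by which $g$ acts on the one-dimensional spaces $\gt{a}^+$ and ${\gt{a}'}^+$ (through $\varrho$ and $\varrho'$), and $\{\mu_1,\mu_2,\mu_3\}$ for the eigenvalues of its action on $\gt{b}^+$ (through $\uvarrho$). The first step is to observe that $(-1)^F$, $y^{J(0)}$, $q^{L(0)}$ and the group action all respect the tensor decomposition (\ref{eqn:moon:5-WWtw}), while $J_\gt{e}(0)$ acts only on the $A(\gt{e})_\tw$ tensorand; hence the trace (\ref{eqn:moon:5-trg}) factors as a product of four factor-traces, and the sum $g+g^{-1}$ turns this into a sum of two such products.

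Next I would evaluate each factor-trace by means of the bigraded dimension formulas of \S\ref{sec:va}. Because $\gt{e}$ is absent from $\jmath^{(5)}$, the insertion of $J_\gt{e}(0)$ on the $A(\gt{e})_\tw$ factor is computed as $[\,y\partial_y(\cdot)\,]_{y=1}$ applied to its $(-1)^F$-twisted character; the latter vanishes at $y=1$, and differentiating once through this simple zero produces the prefactor $\prod_{n>0}(1-q^n)^2$. The two Clifford tensorands $A(\gt{a})_\tw$ and $A(\gt{a}')_\tw$ carry weights $2$ and $3$, so with $(-1)^F$ inserted (which turns the $(1+\cdots)$ of the Clifford formula into $(1-\cdots)$) they contribute $\prod_{n>0}(1-\bar\lambda y^{-2}q^{n-1})(1-\lambda y^2q^n)$ and $\prod_{n>0}(1-\bar\lambda' y^{-3}q^{n-1})(1-\lambda' y^3q^n)$, while the Weyl tensorand $\uA(\gt{b})_\tw$ (weight $1$, with $(-1)^F$ acting trivially) contributes the reciprocal of $\prod_{n>0}\prod_{j=1}^3(1-\bar\mu_j y^{-1}q^{n-1})(1-\mu_j yq^n)$. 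Assembling the pieces yields the exact analogue of (\ref{eqn:moon:7-psitildeprod}),
\begin{gather}
\begin{split}
\widetilde{\psi}^{(5)}_g=&-y\prod_{n>0}\frac{(1-q^n)^2(1-\bar\lambda y^{-2}q^{n-1})(1-\lambda y^2q^n)(1-\bar\lambda' y^{-3}q^{n-1})(1-\lambda' y^3q^n)}{\prod_{j=1}^3(1-\bar\mu_j y^{-1}q^{n-1})(1-\mu_j yq^n)}\\
&-y\prod_{n>0}\frac{(1-q^n)^2(1-\lambda y^{-2}q^{n-1})(1-\bar\lambda y^2q^n)(1-\lambda' y^{-3}q^{n-1})(1-\bar\lambda' y^3q^n)}{\prod_{j=1}^3(1-\mu_j y^{-1}q^{n-1})(1-\bar\mu_j yq^n)}.
\end{split}
\end{gather}

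Convergence of this series in the domain $0<-\Im(z)<\Im(\tau)$, after the substitutions $q=e^{2\pi i\tau}$ and $y=e^{2\pi i z}$, is established exactly as in Theorem~\ref{thm:moon:7-psig}. It then remains to verify, one conjugacy class at a time, that the right-hand side coincides with the meromorphic Jacobi form $\psi^{(5)}_{\iota g}$. For this I would feed the eigenvalues of Table~\ref{tab:moon:5-evals} into the product, use the fusion data of Table~\ref{tab:irr:5_24} to identify the class of $\iota g$ in $G^{(5)}$, and compare with the explicit product expressions for the $\psi^{(5)}_g$ gathered in \S\ref{sec:ujf}, regrouping the theta-quotients exactly as in the model computations (\ref{eqn:moon:7-psitildeprod4A}) and (\ref{eqn:moon:13-psitildeprod2A}).

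I expect the bulk of the work, and the main obstacle, to be precisely this class-by-class comparison: lambency five involves the longest list of relevant classes, among them the order-twelve classes $12A$ and $12B$ whose $\gt{b}^+$-eigenvalues mix cube and fourth roots of unity, and for several classes the numerator and denominator must be repackaged nontrivially before they visibly match the expressions of \S\ref{sec:ujf}. A conceptual point to flag separately---and the reason the solution is only partial---is that one can find representations ($\chi_2$, $\chi_3$ and $\chi_7+\chi_{12}$) realizing the umbral eigenvalue data only on the maximal subgroup $G^{(5)}_{24}$, not on all of $G^{(5)}$; so the argument recovers $\psi^{(5)}_g$ only for those $g$ lying in the image of $\iota$, and part of the verification is to confirm that these chosen characters reproduce Table~\ref{tab:moon:5-evals} exactly, since it is that coincidence which drives the final match.
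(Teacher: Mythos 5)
Your proposal is correct and follows essentially the same route as the paper: the paper's own proof simply declares the argument ``directly similar'' to those of Theorems \ref{thm:moon:7-psig} and \ref{thm:moon:13-psig}, reducing to a case-by-case check that the eigenvalue data of Table \ref{tab:moon:5-evals} (from the characters $\chi_2$, $\chi_3$, $\chi_7+\chi_{12}$ of $G^{(5)}_{24}$) turn the product trace formula into the expressions of (\ref{eqn:exp:6A4-psiXg}), which is exactly your plan, and your displayed product formula (including the $-y$ prefactor, the $(1-q^n)^2$ factor from the $J_\gt{e}(0)$ insertion, and the weight-$2$, $3$, $1$ exponents of $y$) is the correct analogue of (\ref{eqn:moon:7-psitildeprod}). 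You are also right to read the proposition as a statement about $g\in G^{(5)}_{24}$ (the action, and hence $\widetilde{\psi}^{(5)}_g$, is only defined on that maximal subgroup), which matches the paper's intent despite the ``$g\in G^{(5)}$'' in the statement.
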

\begin{proof}
The proof is directly similar to the proofs of Theorems \ref{thm:moon:7-psig} and \ref{thm:moon:13-psig}, and Proposition \ref{prop:moon:4-psig}, and depends upon a verification that the natural product representations of the $\widetilde{\psi}^{(5)}_g$ for $g\in G^{(5)}_{24}$ coincide with the meromorphic Jacobi forms $\psi^{(5)}_{g}$ given in (\ref{eqn:exp:6A4-psiXg}). For the convenience of the reader we present the eigenvalues arising from the representations $\varrho$, $\varrho'$ and $\uvarrho\;$ in Table \ref{tab:moon:5-evals}.
\end{proof}

\section*{Acknowledgements}

The authors thank Matthias Gaberdiel 
and Miranda Cheng for comments, and discussions on closely related topics. 
J.D. gratefully acknowledges support from the Simons Foundation (\#316779), and the U.S. National Science Foundation (DMS 1203162, DMS 1601306).

\appendix

\section{Character Tables}\label{sec:tab}

Here we give character tables 
for the groups that 
appear in \S\ref{sec:moon}.
We use the abbreviations $a_n:=\sqrt{-n}$, $b_n:=(-1+\sqrt{-n})/2$ and $r_n:=\sqrt{n}$.

\begin{table}[ht]
\begin{center}
\caption{Character table of ${G}^{(7)}\simeq\SL_2(3)$}\label{tab:irr:7}
\smallskip
\begin{tabular}{c|rrrrrrr}\toprule
$[g]$	&   1A&   2A&   4A&   3A&   6A&   3B&   6B\\
	\midrule
$\chi_1$&   $1$&   $1$&   $1$&   $1$&   $1$&   $1$&   $1$\\
$\chi_2$&   $1$&   $1$&   $1$&   ${b_3}$&   $\overline{b_3}$&   $\overline{b_3}$&   ${b_3}$\\
$\chi_3$&   $1$&   $1$&   $1$&   $\overline{b_3}$&   ${b_3}$&   ${b_3}$&   $\overline{b_3}$\\
$\chi_4$&   $3$&   $3$&   $-1$&   $0$&   $0$&   $0$&   $0$\\
$\chi_5$&   $2$&   $-2$&   $0$&   $-1$&   $1$&   $-1$&   $1$\\
$\chi_6$&   $2$&   $-2$&   $0$&   $-\overline{b_3}$&   ${b_3}$&   $-{b_3}$&   $\overline{b_3}$\\
$\chi_7$&   $2$&   $-2$&   $0$&   $-{b_3}$&   $\overline{b_3}$&   $-\overline{b_3}$&   ${b_3}$\\\bottomrule
\end{tabular}
\end{center}
\end{table}

\begin{table}[ht]
\begin{center}
\caption{Character table of ${G}^{(13)}\simeq 4$}\label{tab:irr:13}
\smallskip
\begin{tabular}{c|rrrr}\toprule
$[g]$&   1A&   2A&   4A&   4B\\
	\midrule
$\chi_1$&   $1$&   $1$&   $1$&   $1$\\
$\chi_2$&   $1$&   $1$&   $-1$&   $-1$\\
$\chi_3$&   $1$&   $-1$&   $a_1$&   $\overline{a_1}$\\
$\chi_4$&   $1$&   $-1$&   $\overline{a_1}$&   $a_1$\\\bottomrule
\end{tabular}
\end{center}
\end{table}

\begin{table}[ht]
\begin{center}
\caption{Character table of ${G}^{(4)}\simeq 2.\AGL_3(2)$}\label{tab:chars:irr:4}

\smallskip
\begin{tabular}{c|r@{\;\;}r@{\;\;}r@{\;\;}r@{\;\;}r@{\;\;}r@{\;\;}r@{\;\;}r@{\;\;}r@{\;\;}r@{\;\;}r@{\;\;}r@{\;\;}r@{\;\;}r@{\;\;}r@{\;\;}r} \toprule
$[g]$&   1A&   2A&   2B&   2C&   4A&   4B&   3A&   6A&   6B&   6C&   4C&   8A&   7A&   7B&   14A&   14B\\
	\midrule	
${\chi}_{1}$&   $1$&   $1$&   $1$&   $1$&   $1$&   $1$&   $1$&   $1$&   $1$&   $1$&   $1$&   $1$&   $1$&   $1$&   $1$&   $1$\\
${\chi}_{2}$&   $3$&   $3$&   $3$&   $-1$&   $-1$&   $-1$&   $0$&   $0$&   $0$&   $0$&   $1$&   $1$&   ${b_7}$&   $\overline{b_7}$&   ${b_7}$&   $\overline{b_7}$\\
${\chi}_{3}$&   $3$&   $3$&   $3$&   $-1$&   $-1$&   $-1$&   $0$&   $0$&   $0$&   $0$&   $1$&   $1$&   $\overline{b_7}$&   ${b_7}$&   $\overline{b_7}$&   ${b_7}$\\
${\chi}_{4}$&   $6$&   $6$&   $6$&   $2$&   $2$&   $2$&   $0$&   $0$&   $0$&   $0$&   $0$&   $0$&   $-1$&   $-1$&   $-1$&   $-1$\\
${\chi}_{5}$&   $7$&   $7$&   $7$&   $-1$&   $-1$&   $-1$&   $1$&   $1$&   $1$&   $1$&   $-1$&   $-1$&   $0$&   $0$&   $0$&   $0$\\
${\chi}_{6}$&   $8$&   $8$&   $8$&   $0$&   $0$&   $0$&   $-1$&   $-1$&   $-1$&   $-1$&   $0$&   $0$&   $1$&   $1$&   $1$&   $1$\\
${\chi}_{7}$&   $7$&   $7$&   $-1$&   $3$&   $-1$&   $-1$&   $1$&   $1$&   $-1$&   $-1$&   $1$&   $-1$&   $0$&   $0$&   $0$&   $0$\\
${\chi}_{8}$&   $7$&   $7$&   $-1$&   $-1$&   $3$&   $-1$&   $1$&   $1$&   $-1$&   $-1$&   $-1$&   $1$&   $0$&   $0$&   $0$&   $0$\\
${\chi}_{9}$&   $14$&   $14$&   $-2$&   $2$&   $2$&   $-2$&   $-1$&   $-1$&   $1$&   $1$&   $0$&   $0$&   $0$&   $0$&   $0$&   $0$\\
${\chi}_{10}$&   $21$&   $21$&   $-3$&   $-3$&   $1$&   $1$&   $0$&   $0$&   $0$&   $0$&   $1$&   $-1$&   $0$&   $0$&   $0$&   $0$\\
${\chi}_{11}$&   $21$&   $21$&   $-3$&   $1$&   $-3$&   $1$&   $0$&   $0$&   $0$&   $0$&   $-1$&   $1$&   $0$&   $0$&   $0$&   $0$\\
${\chi}_{12}$&   $8$&   $-8$&   $0$&   $0$&   $0$&   $0$&   $2$&   $-2$&   $0$&   $0$&   $0$&   $0$&   $1$&   $1$&   $-1$&   $-1$\\
${\chi}_{13}$&   $8$&   $-8$&   $0$&   $0$&   $0$&   $0$&   $-1$&   $1$&   ${a_3}$&   $\overline{a_3}$&   $0$&   $0$&   $1$&   $1$&   $-1$&   $-1$\\
${\chi}_{14}$&   $8$&   $-8$&   $0$&   $0$&   $0$&   $0$&   $-1$&   $1$&   $\overline{a_3}$&   ${a_3}$&   $0$&   $0$&   $1$&   $1$&   $-1$&   $-1$\\
${\chi}_{15}$&   $24$&   $-24$&   $0$&   $0$&   $0$&   $0$&   $0$&   $0$&   $0$&   $0$&   $0$&   $0$&   ${b_7}$&   $\overline{b_7}$&   $-b_7$&   $-\overline{b_7}$\\
${\chi}_{16}$&   $24$&   $-24$&   $0$&   $0$&   $0$&   $0$&   $0$&   $0$&   $0$&   $0$&   $0$&   $0$&   $\overline{b_7}$&   ${b_7}$&   $-\overline{b_7}$&   $-b_7$\\
\bottomrule
\end{tabular}
\end{center}
\end{table}

\begin{table}[ht]
\begin{center}
\caption{Character table of ${G}^{(4)}_{336}\simeq \SL_2(7)$}\label{tab:chars:irr:4_336}

\smallskip
\begin{tabular}{c|rrrrrrrrrrr} \toprule
$[g]$&   1A&   2A&   4A&   3A&   6A&   8A&   8B&   7A&   7B&   14A&   14B\\
	\midrule	
$[\iota{g}]$&1A&	2A&	4A&	3A&	6A&	8A&	8A&	7A&	7B&	14A&	14B	\\
	\midrule
${\chi}_{1}$&   $1$&   $1$&   $1$&   $1$&   $1$&   $1$&   $1$&   $1$&   $1$&   $1$&   $1$\\
${\chi}_{2}$&   $3$&   $3$&   $-1$&   $0$&   $0$&   $1$&   $1$&   ${b_7}$&   $\overline{b_7}$&   ${b_7}$&   $\overline{b_7}$\\
${\chi}_{3}$&   $3$&   $3$&   $-1$&   $0$&   $0$&   $1$&   $1$&   $\overline{b_7}$&   ${b_7}$&   $\overline{b_7}$&   ${b_7}$\\
${\chi}_{4}$&   $6$&   $6$&   $2$&   $0$&   $0$&   $0$&   $0$&   $-1$&   $-1$&   $-1$&   $-1$\\
${\chi}_{5}$&   $7$&   $7$&   $-1$&   $1$&   $1$&   $-1$&   $-1$&   $0$&   $0$&   $0$&   $0$\\
${\chi}_{6}$&   $8$&   $8$&   $0$&   $-1$&   $-1$&   $0$&   $0$&   $1$&   $1$&   $1$&   $1$\\
${\chi}_{7}$&   $4$&   $-4$&   $0$&   $1$&   $-1$&   $0$&   $0$&   $-b_7$&   $-\overline{b_7}$&   ${b_7}$&   $\overline{b_7}$\\
${\chi}_{8}$&   $4$&   $-4$&   $0$&   $1$&   $-1$&   $0$&   $0$&   $-\overline{b_7}$&   $-b_7$&   $\overline{b_7}$&   ${b_7}$\\
${\chi}_{9}$&   $6$&   $-6$&   $0$&   $0$&   $0$&   $r_{2}$&   $-r_{2}$&   $-1$&   $-1$&   $1$&   $1$\\
${\chi}_{10}$&   $6$&   $-6$&   $0$&   $0$&   $0$&   $-r_2$&   $r_{2}$&   $-1$&   $-1$&   $1$&   $1$\\
${\chi}_{11}$&   $8$&   $-8$&   $0$&   $-1$&   $1$&   $0$&   $0$&   $1$&   $1$&   $-1$&   $-1$\\
\bottomrule
\end{tabular}
\end{center}
\end{table}

\begin{table}[ht]
\begin{center}
\caption{Character table of ${G}^{(5)}\simeq \GL_2(5)/2$}\label{tab:chars:irr:5}
\smallskip
\begin{tabular}{c|r@{\;\;\;}r@{\;\;\;}r@{\;\;\;}r@{\;\;\;}r@{\;\;\;}r@{\;\;\;}r@{\;\;\;}r@{\;\;\;}r@{\;\;\;}r@{\;\;\;}r@{\;\;\;}r@{\;\;\;}r@{\;\;\;}r}\toprule
$[g]$&   1A&   2A&   4A&   4B&   2B&   2C&   4C&   4D&   3A&   6A&   12A&   12B&   5A&   10A\\
	\midrule
${\chi}_{1}$&   $1$&   $1$&   $1$&   $1$&   $1$&   $1$&   $1$&   $1$&   $1$&   $1$&   $1$&   $1$&   $1$&   $1$\\
${\chi}_{2}$&   $1$&   $1$&   $-1$&   $-1$&   $1$&   $1$&   $-1$&   $-1$&   $1$&   $1$&   $-1$&   $-1$&   $1$&   $1$\\
${\chi}_{3}$&   $1$&   $-1$&   ${a_1}$&   $\overline{a_1}$&   $1$&   $-1$&   ${a_1}$&   $\overline{a_1}$&   $1$&   $-1$&   ${a_1}$&   $\overline{a_1}$&   $1$&   $-1$\\
${\chi}_{4}$&   $1$&   $-1$&   $\overline{a_1}$&   ${a_1}$&   $1$&   $-1$&   $\overline{a_1}$&   ${a_1}$&   $1$&   $-1$&   $\overline{a_1}$&   ${a_1}$&   $1$&   $-1$\\
${\chi}_{5}$&   $4$&   $4$&   $2$&   $2$&   $0$&   $0$&   $0$&   $0$&   $1$&   $1$&   $-1$&   $-1$&   $-1$&   $-1$\\
${\chi}_{6}$&   $4$&   $4$&   $-2$&   $-2$&   $0$&   $0$&   $0$&   $0$&   $1$&   $1$&   $1$&   $1$&   $-1$&   $-1$\\
${\chi}_{7}$&   $4$&   $-4$&   $2{a_1}$&   $2\overline{a_1}$&   $0$&   $0$&   $0$&   $0$&   $1$&   $-1$&   $\overline{a_1}$&   ${a_1}$&   $-1$&   $1$\\
${\chi}_{8}$&   $4$&   $-4$&   $2\overline{a_1}$&   $2{a_1}$&   $0$&   $0$&   $0$&   $0$&   $1$&   $-1$&   ${a_1}$&   $\overline{a_1}$&   $-1$&   $1$\\
${\chi}_{9}$&   $5$&   $5$&   $1$&   $1$&   $1$&   $1$&   $-1$&   $-1$&   $-1$&   $-1$&   $1$&   $1$&   $0$&   $0$\\
${\chi}_{10}$&   $5$&   $5$&   $-1$&   $-1$&   $1$&   $1$&   $1$&   $1$&   $-1$&   $-1$&   $-1$&   $-1$&   $0$&   $0$\\
${\chi}_{11}$&   $5$&   $-5$&   ${a_1}$&   $\overline{a_1}$&   $1$&   $-1$&   $\overline{a_1}$&   ${a_1}$&   $-1$&   $1$&   ${a_1}$&   $\overline{a_1}$&   $0$&   $0$\\
${\chi}_{12}$&   $5$&   $-5$&   $\overline{a_1}$&   ${a_1}$&   $1$&   $-1$&   ${a_1}$&   $\overline{a_1}$&   $-1$&   $1$&   $\overline{a_1}$&   ${a_1}$&   $0$&   $0$\\
${\chi}_{13}$&   $6$&   $6$&   $0$&   $0$&   $-2$&   $-2$&   $0$&   $0$&   $0$&   $0$&   $0$&   $0$&   $1$&   $1$\\
${\chi}_{14}$&   $6$&   $-6$&   $0$&   $0$&   $-2$&   $2$&   $0$&   $0$&   $0$&   $0$&   $0$&   $0$&   $1$&   $-1$\\
\bottomrule
\end{tabular}
\end{center}
\end{table}

\begin{table}[ht]
\begin{center}
\caption{Character table of ${G}^{(5)}_{24}\simeq S_3\times 4$}\label{tab:irr:5_24}
\smallskip
\begin{tabular}{c|rrrrrrrrrrrr}\toprule
$[g]$&   1A&   2A&   4A&   4B&   2B&   2C&   4C&   4D&   3A&   6A&   12A&   12B\\
	\midrule
$[\iota{g}]$&1A&	2A&	4A&	4B&	2B&	2C&	4A&	4B&	3A&	6A&	12A&	12B	\\
	\midrule
${\chi}_{1}$&   $1$&   $1$&   $1$&   $1$&   $1$&   $1$&   $1$&   $1$&   $1$&   $1$&   $1$&   $1$\\
${\chi}_{2}$&   $1$&   $1$&   $-1$&   $-1$&   $1$&   $1$&   $-1$&   $-1$&   $1$&   $1$&   $-1$&   $-1$\\
${\chi}_{3}$&   $1$&   $-1$&   $a_1$&   $\overline{a_1}$&   $1$&   $-1$&   $a_1$&   $\overline{a_1}$&   $1$&   $-1$&   $a_1$&   $\overline{a_1}$\\
${\chi}_{4}$&   $1$&   $-1$&   $\overline{a_1}$&   $a_1$&   $1$&   $-1$&   $\overline{a_1}$&   $a_1$&   $1$&   $-1$&   $\overline{a_1}$&   $a_1$\\
${\chi}_{5}$&   $1$&   $1$&   $1$&   $1$&   $-1$&   $-1$&   $-1$&   $-1$&   $1$&   $1$&   $1$&   $1$\\
${\chi}_{6}$&   $1$&   $1$&   $-1$&   $-1$&   $-1$&   $-1$&   $1$&   $1$&   $1$&   $1$&   $-1$&   $-1$\\
${\chi}_{7}$&   $1$&   $-1$&   $a_1$&   $\overline{a_1}$&   $-1$&   $1$&   $\overline{a_1}$&   $a_1$&   $1$&   $-1$&   $a_1$&   $\overline{a_1}$\\
${\chi}_{8}$&   $1$&   $-1$&   $\overline{a_1}$&   $a_1$&   $-1$&   $1$&   $a_1$&   $\overline{a_1}$&   $1$&   $-1$&   $\overline{a_1}$&   $a_1$\\
${\chi}_{9}$&   $2$&   $2$&   $2$&   $2$&   $0$&   $0$&   $0$&   $0$&   $-1$&   $-1$&   $-1$&   $-1$\\
${\chi}_{10}$&   $2$&   $2$&   $-2$&   $-2$&   $0$&   $0$&   $0$&   $0$&   $-1$&   $-1$&   $1$&   $1$\\
${\chi}_{11}$&   $2$&   $-2$&   $2a_1$&   $2\overline{a_1}$&   $0$&   $0$&   $0$&   $0$&   $-1$&   $1$&   $\overline{a_1}$&   $a_1$\\
${\chi}_{12}$&   $2$&   $-2$&   $2\overline{a_1}$&   $2a_1$&   $0$&   $0$&   $0$&   $0$&   $-1$&   $1$&   $a_1$&   $\overline{a_1}$\\
\bottomrule
\end{tabular}
\end{center}
\end{table}

\clearpage

\section{Umbral Jacobi Forms}\label{sec:ujf}

Here we recall from \S B of \cite{umrec} the meromorphic Jacobi forms associated to the groups that we construct in \S\ref{sec:moon}. Some of these expressions were obtained earlier in \cite{UM,MUM}. To present the formulas we use the Dedekind eta and Jacobi theta functions,
\begin{gather}
	\eta(\tau):=q^{\frac1{24}}\prod_{n>0}(1-q^n),\\
	\theta_1(\tau,z):=-iq^{\frac18}y^{\frac12}\prod_{n>0}(1-y^{-1}q^{n-1})(1-yq^n)(1-q^n),\\
	\theta_2(\tau,z):=q^{\frac18}y^{\frac12}\prod_{n>0}(1+y^{-1}q^{n-1})(1+yq^n)(1-q^n),
\end{gather}
where $q=e^{2\pi i \tau}$ and $y=e^{2\pi i z}$. In what follows, the subscript in $\psi^{(\ell)}_{nZ}$ names a conjugacy class in the umbral group $G^{(\ell)}$, where the labelling of the conjugacy classes is as defined by the character tables in \S\ref{sec:tab}. For the cases that $\ell=4$ and $\ell=5$ we only recall formulas for the conjugacy classes that are represented by elements of the groups $G^{(4)}_{336}$ and $G^{(5)}_{24}$ (cf. \S\ref{sec:moon:4} and \S\ref{sec:moon:5}).

\begin{gather}\label{eqn:exp:4A6-psiXg}	
\begin{split}
	\psi^{(7)}_{1A}(\tau,z)&:=2i\frac{\eta(\tau)^3\theta_1(\tau,4z)}{\theta_1(\tau,z)^{2}}\\
	\psi^{(7)}_{2A}(\tau,z)&:=-2i\frac{\eta(\tau)^3\theta_1(\tau,4z)}{\theta_2(\tau,z)^{2}}\\
	\psi^{(7)}_{4A}(\tau,z)&:=-2i\frac{\eta(\tau)\eta(2\tau)\theta_1(\tau,4z)}{\theta_2(2\tau,2z)}\\
	\psi^{(7)}_{3A}(\tau,z)&:=
	-i\frac{\eta(3\tau)}{\theta_1(3\tau,3z)}\times\\
	&\left(\theta_1(\tau,4z+\tfrac13)\theta_1(\tau,z-\tfrac13)
	+\theta_1(\tau,4z-\tfrac13)\theta_1(\tau,z+\tfrac13)\right)
	\\
	\psi^{(7)}_{6A}(\tau,z)&:=
	-i\frac{\eta(3\tau)}{\theta_2(3\tau,3z)}\times\\
	&\left(\theta_1(\tau,4z+\tfrac13)\theta_1(\tau,z-\tfrac16)
	+\theta_1(\tau,4z-\tfrac13)\theta_1(\tau,z+\tfrac16)\right)	
\end{split}
\end{gather}

\begin{gather}\label{eqn:exp:2A12-psiXg}	
\begin{split}	
	\psi^{(13)}_{1A}(\tau,z)&:=2i\frac{\eta(\tau)^3\theta_1(\tau,6z)}{\theta_1(\tau,z)\theta_1(\tau,3z)}\\
	\psi^{(13)}_{2A}(\tau,z)&:=-2i\frac{\eta(\tau)^3\theta_1(\tau,6z)}{\theta_2(\tau,z)\theta_2(\tau,3z)}\\
	\psi^{(13)}_{4AB}(\tau,z)&:=
	-i\frac{\eta(2\tau)^2\theta_2(\tau,6z)}{\eta(\tau)\theta_2(2\tau,2z)\theta_2(2\tau,6z)}\times\\
	&\left(\theta_1(\tau,z+\tfrac14)\theta_1(\tau,3z+\tfrac14)
	-\theta_1(\tau,z-\tfrac14)\theta_1(\tau,3z-\tfrac14)\right)
\end{split}
\end{gather}

\begin{gather}\label{eqn:exp:8A3-psiXg}	
	\begin{split}
	\psi^{(4)}_{1A}&:=2i\frac{\eta(\tau)^3\theta_1(\tau,2z)^3}{\theta_1(\tau,z)^{4}}\\
	\psi^{(4)}_{2A}&:=2i\frac{\eta(\tau)^3\theta_1(\tau,2z)^3}{\theta_2(\tau,z)^{4}}\\
	\psi^{(4)}_{4A}&:=-2i\frac{\eta(2\tau)^2\theta_1(\tau,2z)\theta_2(\tau,2z)^2}{\eta(\tau)\theta_2(2\tau,2z)^{2}}\\
	\psi^{(4)}_{3A}&:=2i\frac{\eta(\tau)^3\theta_1(3\tau,6z)}{\theta_1(\tau,z)\theta_1(3\tau,3z)}\\
	\psi^{(4)}_{6A}&:=-2i\frac{\eta(\tau)^3\theta_1(3\tau,6z)}{\theta_2(\tau,z)\theta_2(3\tau,3z)}\\
	\psi^{(4)}_{8A}&:=-2i\frac{\eta(\tau)\eta(4\tau)\theta_1(\tau,2z)\theta_2(2\tau,4z)}{\eta(2\tau)\theta_2(4\tau,4z)}\\
	\psi^{(4)}_{7AB}&:=-i\frac{\eta(7\tau)}{\eta(\tau)^4\theta_1(7\tau,7z)}\times\\
	(
	\theta_1(\tau,2z+\tfrac{1}{7})&\theta_1(\tau,2z+\tfrac{2}{7})\theta_1(\tau,2z+\tfrac{4}{7})
	\theta_1(\tau,z-\tfrac{1}{7})\theta_1(\tau,z-\tfrac{2}{7})\theta_1(\tau,z-\tfrac{4}{7})\\
	+\theta_1(\tau,2z-&\tfrac{1}{7})\theta_1(\tau,2z-\tfrac{2}{7})\theta_1(\tau,2z-\tfrac{4}{7})
	\theta_1(\tau,z+\tfrac{1}{7})\theta_1(\tau,z+\tfrac{2}{7})\theta_1(\tau,z+\tfrac{4}{7})
	)
	\\
	\psi^{(4)}_{14AB}&:=i\frac{\eta(7\tau)}{\eta(\tau)^4\theta_2(7\tau,7z)}\times\\
	(
	\theta_1(\tau,2z+\tfrac{1}{7})&\theta_1(\tau,2z+\tfrac{2}{7})\theta_1(\tau,2z+\tfrac{4}{7})\theta_2(\tau,z-\tfrac{1}{7})\theta_2(\tau,z-\tfrac{2}{7})\theta_2(\tau,z-\tfrac{4}{7})\\
	+\theta_1(\tau,2z-&\tfrac{1}{7})\theta_1(\tau,2z-\tfrac{2}{7})\theta_1(\tau,2z-\tfrac{4}{7})\theta_2(\tau,z+\tfrac{1}{7})\theta_2(\tau,z+\tfrac{2}{7})\theta_2(\tau,z+\tfrac{4}{7}))
	\end{split}
\end{gather}

\begin{gather}\label{eqn:exp:6A4-psiXg}	
\begin{split}
	\psi^{(5)}_{1A}(\tau,z)&:=2i\frac{\eta(\tau)^3\theta_1(\tau,2z)\theta_1(\tau,3z)}{\theta_1(\tau,z)^{3}}\\
	\psi^{(5)}_{2A}(\tau,z)&:=-2i\frac{\eta(\tau)^3\theta_1(\tau,2z)\theta_2(\tau,3z)}{\theta_2(\tau,z)^{3}}\\
	\psi^{(5)}_{2B}(\tau,z)&:=-2i\frac{\eta(\tau)^3\theta_1(\tau,2z)\theta_1(\tau,3z)}{\theta_1(\tau,z)\theta_2(\tau,z)^{2}}\\
	\psi^{(5)}_{2C}(\tau,z)&:=2i\frac{\eta(\tau)^3\theta_1(\tau,2z)\theta_2(\tau,3z)}{\theta_1(\tau,z)^{2}\theta_2(\tau,z)}\\
	\psi^{(5)}_{3A}(\tau,z)&:=-2i\frac{\eta(3\tau)\theta_1(\tau,2z)\theta_1(\tau,3z)}{\theta_1(3\tau,3z)}\\
	\psi^{(5)}_{6A}(\tau,z)&:=-2i\frac{\eta(3\tau)\theta_1(\tau,2z)\theta_2(\tau,3z)}{\theta_2(3\tau,3z)}\\
	\psi^{(5)}_{4AB}(\tau,z)&:=
	-i\frac{\eta(2\tau)^2\theta_2(\tau,2z)}{\eta(\tau)\theta_2(2\tau,2z)^2}\times\\
	&\left(\theta_1(\tau,z+\tfrac14)\theta_1(\tau,3z+\tfrac14)-\theta_1(\tau,z-\tfrac14)\theta_1(\tau,3z-\tfrac14)\right)
	\\
	\psi^{(5)}_{12AB}(\tau,z)&:=
	i\frac{\eta(6\tau)\theta_2(\tau,2z)}{\eta(\tau)^3\theta_2(6\tau,6z)}\times\\
	&(\theta_1(\tau,z+\tfrac{1}{12})\theta_1(\tau,z+\tfrac{1}{4})\theta_1(\tau,z+\tfrac{5}{12})\theta_1(\tau,3z-\tfrac14)
	\\
	&\quad-
	\theta_1(\tau,z-\tfrac{1}{12})\theta_1(\tau,z-\tfrac{1}{4})\theta_1(\tau,z-\tfrac{5}{12})\theta_1(\tau,3z+\tfrac14))
\end{split}
\end{gather}

\clearpage

\section{Euler Characters}\label{sec:tab:eul}

Here we tabulate the character values $\bar{\chi}^{(\ell)}_{g}$ and $\chi^{(\ell)}_g$ 
for each $g\in G^{(\ell)}$, for each $\ell\in\{4,5,7,13\}$.

\begin{table}[ht]

\begin{center}
\caption{
Euler characters at $\ell=7$}\label{tab:chars:7}\smallskip
\begin{tabular}{c|ccccc}\toprule
$[g]$&   1A&   2A&   4A&   3AB&   6AB\\ 
	\midrule
$\bar{\chi}^{(\ell)}_{g}$&	4&	4&	0&	1&	1\\
$\chi^{(\ell)}_{g}$&	4&	-4&	0&	1&	-1\\
\bottomrule
\end{tabular}

\medskip

\caption{Euler characters at $\ell=13$}\label{tab:chars:13}
\smallskip
\begin{tabular}{r|rrr}\toprule
	$[g]$&	1A&	2A&	4AB\\
		\midrule
	$\bar{\chi}^{(\ell)}_{g}$	&2&2&0\\
	$\chi^{(\ell)}_{g}$	&2&-2&0\\
\bottomrule
\end{tabular}

\medskip

\caption{Euler characters at $\ell=4$}\label{tab:chars:4}
\smallskip
\begin{tabular}{c@{\, }|@{\;}c@{\, }c@{\, }c@{\, }c@{\, }c@{\, }c@{\, }c@{\, }c@{\, }c@{\, }c@{\, }c@{\, }c@{\, }c}\toprule
$[g]$&   		1A&   2A&   	2B&   	4A&			4B&			2C&   	3A&   	6A&   		6BC&   	8A&   	4C&   	7AB&   	14AB\\ 
	\midrule
$\bar{\chi}^{(\ell)}_g$&   $8$&$8$&	$0$& 	$0$& 		$0$&		$4$&  	$2$& 	$2$&  		$0$& 	$0$& 	$2$& 	$1$& 	$1$\\
$\chi^{(\ell)}_g$&   $8$&$-8$&	$0$&	$0$& 		$0$&		$0$&  	$2$& 	$-2$& 		$0$& 	$0$& 	$0$& 	$1$& 	$-1$\\
\bottomrule
\end{tabular}

\medskip

\caption{Euler characters at $\ell=5$}\label{tab:chars:5}
\smallskip
\begin{tabular}{c@{\, }|@{\;}c@{\, }c@{\, }c@{\, }c@{\, }c@{\, }c@{\, }c@{\, }c@{\, }c@{\, }c@{\, }c@{\, }c@{\, }c}\toprule
$[g]$&   		1A&		2A&   	2B&   	2C&			3A&			6A&   	5A&   	10A&   		4AB&   	4CD&	12AB\\ 
	\midrule
$\bar{\chi}^{(\ell)}_{g}$&   $6$&	$6$&	$2$& 	$2$& 		$0$&		$0$&  	$1$& 	$1$&  		$0$& 	$2$& 	$0$ 	\\
$\chi^{(\ell)}_{g}$&   $6$&	$-6$&	$-2$&	$2$& 		$0$&		$0$&  	$1$& 	$-1$& 		$0$&	$0$& 	$0$ 	\\
\bottomrule
\end{tabular}
\smallskip
\end{center}

\end{table}

\clearpage

\addcontentsline{toc}{section}{References}



\begin{thebibliography}{GHV10b}

\bibitem[Abe07]{MR2274534}
Toshiyuki Abe.
\newblock A {${\Bbb Z}_2$}-orbifold model of the symplectic fermionic vertex
  operator superalgebra.
\newblock {\em Math. Z.}, 255(4):755--792, 2007.

\bibitem[Bor92]{MR1172696}
Richard~E. Borcherds.
\newblock Monstrous moonshine and monstrous {L}ie superalgebras.
\newblock {\em Invent. Math.}, 109(2):405--444, 1992.

\bibitem[CD12]{Cheng2011}
Miranda C.~N. Cheng and John F.~R. Duncan.
\newblock {On Rademacher Sums, the Largest Mathieu Group, and the Holographic
  Modularity of Moonshine}.
\newblock {\em Commun. Number Theory Phys.}, 6(3):697--758, 2012.

\bibitem[CDH14a]{UM}
Miranda C.~N. Cheng, John F.~R. Duncan, and Jeffrey~A. Harvey.
\newblock {Umbral Moonshine}.
\newblock {\em Commun. Number Theory Phys.}, 8(2):101--242, 2014.

\bibitem[CDH14b]{MUM}
Miranda C.~N. Cheng, John F.~R. Duncan, and Jeffrey~A. Harvey.
\newblock {Umbral Moonshine and the Niemeier Lattices}.
\newblock {\em Research in the Mathematical Sciences}, 1(3):1--81, 2014.

\bibitem[CDH17]{mumcor}
M.~C.~N. {Cheng}, J.~F.~R. {Duncan}, and J.~A. {Harvey}.
\newblock {Weight One Jacobi Forms and Umbral Moonshine}.
\newblock {\em J. Phys. A: Math. Theor.} {\bf 51} (2018) 104002.

\bibitem[CDR17]{2017arXiv170403678C}
T.~{Creutzig}, J.~F.~R. {Duncan}, and W.~{Riedler}.
\newblock {Self-Dual Vertex Operator Superalgebras and Superconformal Field
  Theory}.
\newblock {\em J. Phys. A: Math. Theor.} {\bf 51} (2018) 034001.

\bibitem[Che10]{MR2793423}
Miranda C.~N. Cheng.
\newblock {$K3$} surfaces, {$N=4$} dyons and the {M}athieu group {$M_{24}$}.
\newblock {\em Commun. Number Theory Phys.}, 4(4):623--657, 2010.

\bibitem[CHVZ16]{Cheng:2016org}
Miranda C.~N. Cheng, Sarah~M. Harrison, Roberto Volpato, and Max Zimet.
\newblock {K3 String Theory, Lattices and Moonshine}.
\newblock 2016.

\bibitem[CN79]{MR554399}
J.~H. Conway and S.~P. Norton.
\newblock Monstrous moonshine.
\newblock {\em Bull. London Math. Soc.}, 11(3):308--339, 1979.

\bibitem[DGO15]{umrec}
J.~F.~R. {Duncan}, M.~J. {Griffin}, and K.~{Ono}.
\newblock {Proof of the Umbral Moonshine Conjecture}.
\newblock {\em Research in the Mathematical Sciences}, 2(26), March 2015.

\bibitem[DH17]{MR3649360}
John Duncan and Jeffrey Harvey.
\newblock The umbral moonshine module for the unique unimodular {N}iemeier root
  system.
\newblock {\em Algebra \& Number Theory}, 11(3):505--535, 2017.

\bibitem[DMC15]{MR3376736}
John F.~R. Duncan and Sander Mack-Crane.
\newblock The moonshine module for {C}onway's group.
\newblock {\em Forum Math. Sigma}, 3:e10, 52, 2015.

\bibitem[DMC16]{MR3465528}
John F.~R. Duncan and Sander Mack-Crane.
\newblock Derived equivalences of {K}3 surfaces and twined elliptic genera.
\newblock {\em Res. Math. Sci.}, 3:Art. 1, 47, 2016.

\bibitem[DMZ12]{Dabholkar:2012nd}
Atish Dabholkar, Sameer Murthy, and Don Zagier.
\newblock {Quantum Black Holes, Wall Crossing, and Mock Modular Forms}.
\newblock {\em ArXiv e-prints}, 2012.


\bibitem[Dun07]{Dun_VACo}
John~F. Duncan.
\newblock Super-{M}oonshine for {C}onway's largest sporadic group.
\newblock {\em Duke Math. J.}, 139(2):255--315, 2007.

\bibitem[EH11]{Eguchi2010a}
Tohru Eguchi and Kazuhiro Hikami.
\newblock {Note on Twisted Elliptic Genus of K3 Surface}.
\newblock {\em Phys. Lett.}, B694:446--455, 2011.

\bibitem[EOT11]{Eguchi2010}
Tohru Eguchi, Hirosi Ooguri, and Yuji Tachikawa.
\newblock {Notes on the K3 Surface and the Mathieu group $M_{24}$}.
\newblock {\em Exper. Math.}, 20:91--96, 2011.

\bibitem[FBZ04]{MR2082709}
Edward Frenkel and David Ben-Zvi.
\newblock {\em Vertex algebras and algebraic curves}, volume~88 of {\em
  Mathematical Surveys and Monographs}.
\newblock American Mathematical Society, Providence, RI, second edition, 2004.

\bibitem[FLM84]{FLMPNAS}
Igor~B. Frenkel, James Lepowsky, and Arne Meurman.
\newblock A natural representation of the {F}ischer-{G}riess {M}onster with the
  modular function {$J$} as character.
\newblock {\em Proc. Nat. Acad. Sci. U.S.A.}, 81(10, Phys. Sci.):3256--3260,
  1984.

\bibitem[FLM85]{FLMBerk}
Igor~B. Frenkel, James Lepowsky, and Arne Meurman.
\newblock A moonshine module for the {M}onster.
\newblock In {\em Vertex operators in mathematics and physics (Berkeley,
  Calif., 1983)}, volume~3 of {\em Math. Sci. Res. Inst. Publ.}, 
  231--273. Springer, New York, 1985.

\bibitem[FLM88]{FLM}
Igor~B. Frenkel, James Lepowsky, and Arne Meurman.
\newblock {\em Vertex operator algebras and the {M}onster}, volume 134 of {\em
  Pure and Applied Mathematics}.
\newblock Academic Press Inc., Boston, MA, 1988.

\bibitem[Gan16]{MR3539377}
Terry Gannon.
\newblock Much ado about {M}athieu.
\newblock {\em Adv. Math.}, 301:322--358, 2016.

\bibitem[GHV10a]{Gaberdiel2010a}
Matthias~R. Gaberdiel, Stefan Hohenegger, and Roberto Volpato.
\newblock {Mathieu Moonshine in the elliptic genus of K3}.
\newblock {\em JHEP}, 1010:062, 2010.

\bibitem[GHV10b]{Gaberdiel2010}
Matthias~R. Gaberdiel, Stefan Hohenegger, and Roberto Volpato.
\newblock {Mathieu twining characters for K3}.
\newblock {\em JHEP}, 1009:058, 2010.

\bibitem[GHV12]{MR2955931}
Matthias~R. Gaberdiel, Stefan Hohenegger, and Roberto Volpato.
\newblock Symmetries of {K}3 sigma models.
\newblock {\em Commun. Number Theory Phys.}, 6(1):1--50, 2012.

\bibitem[Kac98]{MR1651389}
Victor Kac.
\newblock {\em Vertex algebras for beginners}, volume~10 of {\em University
  Lecture Series}.
\newblock American Mathematical Society, Providence, RI, second edition, 1998.

\bibitem[LL04]{MR2023933}
James Lepowsky and Haisheng Li.
\newblock {\em Introduction to vertex operator algebras and their
  representations}, volume 227 of {\em Progress in Mathematics}.
\newblock Birkh\"auser Boston Inc., Boston, MA, 2004.

\bibitem[PVZ17]{2017arXiv170205095P}
N.~M. {Paquette}, R.~{Volpato}, and M.~{Zimet}.
\newblock {No More Walls! A Tale of Modularity, Symmetry, and Wall Crossing for
  1/4 BPS Dyons}.
\newblock {\em JHEP} 1705:047 2017. 

\bibitem[Tho79a]{Tho_FinGpsModFns}
J.~G. Thompson.
\newblock Finite groups and modular functions.
\newblock {\em Bull. London Math. Soc.}, 11(3):347--351, 1979.

\bibitem[Tho79b]{Tho_NmrlgyMonsEllModFn}
J.~G. Thompson.
\newblock Some numerology between the {F}ischer-{G}riess {M}onster and the
  elliptic modular function.
\newblock {\em Bull. London Math. Soc.}, 11(3):352--353, 1979.

\bibitem[TW17]{2017arXiv170403813T}
A.~{Taormina} and K.~{Wendland}.
\newblock {The Conway Moonshine Module is a Reflected K3 Theory}.
\newblock {\em ArXiv e-prints}, April 2017.

\end{thebibliography}
\end{document}